\theoremstyle{plain}
\newtheorem*{theo}{Theorem}
\newtheorem{conj}{Conjecture}
\newtheorem*{conj*}{Conjecture}
\newtheorem{theorem}{Theorem}[section]
\newtheorem{proposition}[theorem]{Proposition}
\newtheorem{lemma}[theorem]{Lemma}
\newtheorem{remark}[theorem]{Remark}
\theoremstyle{definition}
\newtheorem{defn}[theorem]{Definition}
\newcounter{biscompt}
\newtheorem{conjbis}[biscompt]{Conjecture}
\newcommand{\C}{\mathbb{C}}
\newcommand{\NN}{\mathbb{N}}
\newcommand{\Aut}{\operatorname{Aut}}
\newcommand{\Id}{\operatorname{Id}}
\newcommand{\rk}{\operatorname{rk}}
\newcommand{\Hom}{\operatorname{Hom}}
\newcommand{\End}{\operatorname{End}}
\newcommand{\GL}{\operatorname{GL}}
\newcommand{\im}{\operatorname{Im}}
\newcommand{\vol}{\operatorname{vol}}
\newcommand{\N}{\operatorname{N}}
\newcommand{\eg}{\emph{e.g. }}
\newcommand{\ie}{\emph{i.e. }}
\theoremstyle{definition}
\newtheorem*{defn*}{Definition}
\begin{document}

\title{Slopes of Euclidean lattices, tensor product and group actions}

\author{Renaud Coulangeon and Gabriele Nebe}
\address{% 
	Univ. Bordeaux, CNRS, IMB, UMR 5251,
	33405 Talence cedex, France} 
\email{Renaud.Coulangeon@math.u-bordeaux.fr}

\address{% 
Lehrstuhl D f\"ur Mathematik,
RWTH Aachen, 
D-52056 Aachen }
\email{nebe@math.rwth-aachen.de}

\date{\today}
\maketitle
	
	\begin{abstract}
		We study the behaviour of the minimal slope of Euclidean lattices (or more generally ${\mathcal O}_K$-lattices)  under tensor product. A general conjecture predicts that 
		$$\mu_{min}(L \otimes M) = \mu_{min}(L)\mu_{min}(M)$$
		for all lattices $L$ and $M$. We prove that this is the case under the additional assumptions that $L$ and $M$ are acted on multiplicity-free by their automorphism group, such that one of them 
		has at most $2$ irreducible components.
	\end{abstract}
	\section*{Introduction}
	
	The notion of \emph{slope} and the related concept of \emph{(semi)stability} were initially introduced to study vector bundles over smooth projective curves \cite{MR0175899,MR0184252}. Since then, similar concepts have appeared in a wide range of mathematical contexts, often by analogy with the original geometric setting. 
	In these various theories, one can define a \emph{canonical filtration} of any object by semistable ones, a fact first recognized by G. Harder and M. Narasinham in the case of vector bundles on curves \cite{MR0364254}. A \emph{canonical polygon} is associated to this filtration, from which the terminology of "slopes" arise. The similarities between the many occurrences of these \emph{slope filtrations} strongly appealed for a general theoretical framework, which was developed more recently by Y. Andr\'e \cite{MR2571693} and H. Chen \cite{MR2559690}. \medskip

	This formalism applies in particular to Euclidean lattices, as observed by U. Stuhler in \cite{MR0424707,MR0447126}. This program was pushed further by D. Grayson \cite{MR780079,MR870711}, who laid the foundation of the theory in the more general context of $\mathcal{O}_K$-lattices, $\mathcal{O}_K$ being the ring of integers of a number field $K$, and showed that these ideas may be used as an alternative to Borel-Serre compactification to prove the finite presentation of arithmetic groups and the finite generation of their (co)homology. An Arakelov version of this theory, in terms of Hermitian vector bundles, was built by J.-B. Bost in the 1990s, and has been much studied since then. Sticking to the case of ordinary Euclidean lattices, 
	the relevant notions are defined as follows : 
	\begin{itemize}
		\item the slope $\mu(L)$ of a nonzero Euclidean lattice $L$ is the quantity $\left( \vol L\right)^{1/\dim L}$ ; 
		\item its minimal slope $\mu_{min} (L)$ is the minimum of the slopes of all its non zero sublattices ;
		\item a lattice is \emph{semistable} if its slope and minimal slope coincide, or equivalently, if its canonical filtration is trivial. 
	\end{itemize}Note that we adopt here a multiplicative version of the slope, following Stuhler, whereas more recent works, in compliance with the geometric origin of the theory, rather define the slope as $\dfrac{-\log \vol L}{\dim L}=-\log \mu(L)$, and accordingly consider \emph{maximal} instead of \emph{minimal} slope. \medskip 

Quite naturally, one would like to understand the behaviour of slopes and semistability under standard algebraic operations (direct sum, exact sequences, duality, tensor product). In this respect, the tensor product is rather enigmatic. It is known that the tensor product of semistable vector bundles on a smooth algebraic curve in characteristic zero is semistable \cite{MR0184252}. Conservation of semistability under tensor product is also known to hold in several other contexts where a slope filtration is available, but it can apparently not be explained using general formal arguments : in all cases an \emph{ad hoc} proof is needed, often difficult. In \cite{MR2571693}, Y. Andr\'e coined the term "tensor-multiplicative" to qualify slope filtrations having this property. Surprisingly, the question as to whether tensor-multiplicativity holds for Euclidean lattices is still largely open. Recall that the tensor product of two inner product spaces $E$ and $F$ can be turned into an inner product space by setting 
	\begin{equation}\label{tp}
	x\otimes y \cdot x'\otimes y' = (x\cdot x')(y \cdot y') 
	\end{equation}
	 for all $x, x'$ in $E$ and $y, y'$ in $F$ and extending \eqref{tp} to $E\otimes F$ by bilinearity. If $L\subset E$ and $M \subset F$ are two Euclidean lattices, their tensor product $L \otimes M$ thus inherits a structure of Euclidean lattice int $E \otimes F$ equipped with the above inner product. In a seminar held in Oberwolfach in July 1997, J.-B. Bost conjectured that the slope filtration of Euclidean lattices is tensor-multiplicative. The initial conjecture was formulated in the wider context of Hermitian vector bundles, or $\mathcal{O}_K$-lattices in the terminology of Grayson. Extending the notions of slopes to this more general context, the conjecture reduces to the following statement: 
	\begin{conj}\label{bc} 
The minimal slope of the tensor product of two $\mathcal{O}_K$-lattices $L$ and $M$ is equal to the product of their respective minimal slopes.
	\begin{equation}\label{star}
		\mu_{min}(L \otimes_{\mathcal{O}_K} M) = \mu_{min}(L)\mu_{min}(M).
	\end{equation}
	\end{conj}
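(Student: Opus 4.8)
The plan is to split \eqref{star} into the two opposite inequalities and to isolate its genuinely hard core.

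\emph{The easy inequality.} First observe that for any nonzero $L'\subseteq L$ and $M'\subseteq M$, writing $m=\dim L'$ and $n=\dim M'$ (each viewed in its own span), one has $\dim(L'\otimes M')=mn$ and $\vol(L'\otimes M')=(\vol L')^{n}(\vol M')^{m}$, whence $\mu(L'\otimes M')=(\vol L')^{1/m}(\vol M')^{1/n}=\mu(L')\mu(M')$. Applying this to sublattices $L_0\subseteq L$, $M_0\subseteq M$ realizing $\mu_{min}(L)$ and $\mu_{min}(M)$ produces the sublattice $L_0\otimes M_0\subseteq L\otimes M$ of slope $\mu_{min}(L)\mu_{min}(M)$, so that
\begin{equation*}
\mu_{min}(L\otimes M)\le \mu_{min}(L)\mu_{min}(M).
\end{equation*}
Everything therefore reduces to the reverse bound $\mu_{min}(L\otimes M)\ge\mu_{min}(L)\mu_{min}(M)$.

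\emph{Reduction to the semistable case.} I would next use the formalism of slope filtrations of Andr\'e and Chen to reduce \eqref{star} to the single assertion that \emph{the tensor product of two semistable $\mathcal{O}_K$-lattices is again semistable}. The mechanism is the elementary fact that $\mu_{min}$ is controlled by subquotients: if $0=N_0\subset N_1\subset\dots\subset N_k=N$ is any filtration, then $\mu_{min}(N)\ge\min_t\mu_{min}(N_t/N_{t-1})$, since every sublattice $S\subseteq N$ inherits the filtration $S\cap N_t$ whose graded pieces embed isometrically into the $N_t/N_{t-1}$, and $\vol$ is multiplicative along a filtration. Applying this to the tensor-product filtration of $L\otimes M$ attached to the Harder--Narasimhan filtrations of $L$ (graded pieces $A_1,\dots,A_r$) and of $M$ (graded pieces $B_1,\dots,B_s$), whose graded pieces are the $A_i\otimes B_j$, and granting that each such tensor product of semistable lattices is semistable, gives $\mu_{min}(A_i\otimes B_j)=\mu(A_i)\mu(B_j)$; since the Harder--Narasimhan slopes satisfy $\mu(A_i)\ge\mu(A_1)=\mu_{min}(L)$ and likewise for $M$, one obtains
\begin{equation*}
\mu_{min}(L\otimes M)\ge\min_{i,j}\mu(A_i)\mu(B_j)=\mu_{min}(L)\mu_{min}(M).
\end{equation*}
This step is purely formal and requires no arithmetic input.

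\emph{The semistable case, and the main obstacle.} The whole difficulty is thus concentrated in the implication: if $L$ and $M$ are semistable, then $L\otimes M$ is semistable. After rescaling one may assume $\mu(L)=\mu(M)=1$, i.e. $\vol L=\vol M=1$, and semistability of $L\otimes M$ then means that every nonzero sublattice $N\subseteq L\otimes M$ satisfies $\vol N\ge 1$. With $L\subset E$ and $M\subset F$ as in \eqref{tp}, a putative destabilizing $N$ spans a proper subspace $V\subsetneq E\otimes F$ on which the induced lattice has covolume $<1$, and the task is to prove that no such $V$ exists. I expect this to be the crux, and it is precisely where every general strategy currently stalls. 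Bost's analytic approach, via Poisson summation and theta invariants, yields only the weaker bound $\mu_{min}(L\otimes M)\ge \mu_{min}(L)\mu_{min}(M)\cdot c(\dim L,\dim M)$ with a defect factor $c<1$ (as in the estimates of Bost and K\"unnemann), and removing this defect seems to require genuinely new input. The most promising unconditional avenue is a geometric analysis of the destabilizing subspace $V$ through its singular-value/flattening structure relative to the factorization $E\otimes F$, or through the action of $\End(E)\otimes\End(F)$; but controlling $\vol\bigl((L\otimes M)\cap V\bigr)$ for an \emph{arbitrary} $V$ is exactly what is missing. This is why, in the present paper, we do not attack $V$ directly: we instead exploit the automorphism groups $\Aut(L)$ and $\Aut(M)$ to decompose $L\otimes M$ into isotypic components on which semistability can be read off, which renders the multiplicity-free, few-constituents situation tractable while the unconditional statement \eqref{star} remains open.
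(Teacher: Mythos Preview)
The statement you are addressing is Conjecture~\ref{bc}, which the paper does \emph{not} prove: it is explicitly presented as open, and the main result (Theorem~\ref{mt}) only settles the special case where both lattices are multiplicity-free and one of them has at most two irreducible constituents. There is therefore no ``paper's own proof'' to compare against, and your proposal is, quite properly, not a proof either --- you say as much in your final paragraph. What you have written is a correct structural analysis of the conjecture.

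Your easy inequality is fine and matches the paper's one-line remark after Conjecture~\ref{unbis}. Your reduction to the semistable case (the equivalence of Conjectures~\ref{bc} and~\ref{unbis}) is the content of Proposition~\ref{reduction}\eqref{un}. The paper argues by minimal counterexample, applying Lemma~\ref{x} once to the destabilizing sublattice $L'\subsetneq L$; you instead iterate the same inequality along the full Harder--Narasimhan filtrations of both factors, obtaining a filtration of $L\otimes M$ with graded pieces $A_i\otimes B_j$. Both routes rest on the same tool, and both are valid. One correction, though: the graded pieces $(S\cap N_t)/(S\cap N_{t-1})$ do \emph{not} embed \emph{isometrically} into $N_t/N_{t-1}$. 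The natural map is only norm-nonincreasing --- this is exactly the parallelogram constraint, Lemma~\ref{para}. That is still enough for your purpose: the image, after saturation, is a sublattice of $N_t/N_{t-1}$ of slope at most $\mu\bigl((S\cap N_t)/(S\cap N_{t-1})\bigr)$ and at least $\mu_{min}(N_t/N_{t-1})$, so your chain of inequalities survives; but the word ``isometrically'' should be removed.

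Beyond the reduction, you correctly isolate the implication ``semistable $\otimes$ semistable $\Rightarrow$ semistable'' as the hard core and leave it open. So does the paper in general; its contribution (Section~\ref{main}) is to handle that core under the additional symmetry hypotheses via Proposition~\ref{mf} and a delicate index computation, not to resolve Conjecture~\ref{bc} itself.
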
 
An equivalent formulation of this conjecture, in terms of semistability, is as follows :
\begin{conjbis}\label{unbis}
The tensor product of two semistable lattices is semistable.
\end{conjbis}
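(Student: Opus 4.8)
The plan is to prove the equivalent slope inequality directly. Write $P = L \otimes_{\mathcal{O}_K} M$. Since $\mu(P) = \mu(L)\mu(M)$ is elementary (covolumes multiply, $\vol P = \vol(L)^{\dim M}\vol(M)^{\dim L}$, while $\dim P = \dim L \cdot \dim M$), and since $\mu_{min}(P) \le \mu(P)$ always holds, the semistability of $P$ is equivalent to the reverse inequality
\begin{equation*}
\mu_{min}(P) \ge \mu(L)\mu(M),
\end{equation*}
that is, to the assertion that every nonzero sublattice $N \subseteq P$ has $\mu(N) \ge \mu(L)\mu(M)$. The opposite bound $\mu_{min}(P) \le \mu_{min}(L)\mu_{min}(M) = \mu(L)\mu(M)$ is trivial, obtained by tensoring sublattices of $L$ and $M$ of minimal slope. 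So everything reduces to bounding from below the slope of the \emph{maximal destabilizing sublattice} $N \subseteq P$, for which $\mu(N) = \mu_{min}(P)$ and which is saturated.

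First I would use the functoriality of the canonical filtration: being canonical, its first step $N$ is stable under $\Aut(P)$, hence under the subgroup $\Aut(L) \times \Aut(M)$ acting by $(\sigma,\tau)\colon x\otimes y \mapsto \sigma x \otimes \tau y$. Now the multiplicity-free hypothesis enters. Writing the isotypic decompositions $L \otimes \mathbb{R} = \orthsum_{i=1}^{r} V_i$ and $M \otimes \mathbb{R} = \orthsum_{j=1}^{s} W_j$ into pairwise non-isomorphic irreducibles (the same holds over $K\otimes_{\mathbb{Q}}\mathbb{R}$ in the arithmetic case), the space $P \otimes \mathbb{R} = \orthsum_{i,j} V_i \otimes W_j$ is again multiplicity-free for $\Aut(L)\times\Aut(M)$, since the outer tensor products of distinct irreducibles of a direct product remain distinct irreducibles. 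Consequently every invariant subspace is a sum of blocks, and in particular
\begin{equation*}
N \otimes \mathbb{R} = \orthsum_{(i,j)\in S} V_i \otimes W_j
\end{equation*}
for some $S \subseteq \{1,\dots,r\}\times\{1,\dots,s\}$. This is the decisive structural gain: the group action forces the destabilizer to be supported on a union of blocks.

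It then remains to show that no block-supported saturated $N$ can have $\mu(N) < \mu(L)\mu(M)$ or, equivalently, that the quotient $P/N$ has slope $\le \mu(L)\mu(M)$. Group the support into columns $A_j = \{\, i : (i,j)\in S \,\}$, so that $P/N$ is supported on $\orthsum_{j} V_{A_j^c}\otimes W_j$. When all columns agree, $A_j \equiv A$, the orthogonal projection realising $P/N$ factors as $p_{V_A^\perp}\otimes \id_M$, so $P/N$ is the honest tensor product $(L/L_A)\otimes M$ with $L_A = L\cap V_A$; semistability of $L$ gives $\mu(L/L_A) \le \mu(L)$, whence $\mu(P/N) = \mu(L/L_A)\mu(M) \le \mu(L)\mu(M)$, as required. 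The real difficulty is the case of unequal columns, where $P/N$ is no longer a tensor product. Here the hypothesis that one factor, say $M$, has at most two irreducible components ($s \le 2$) is decisive: the support $S$ lives in an $r\times 2$ grid, so only two columns $A_1, A_2$ can disagree, and one reduces to the aligned situation by a single uncrossing step, comparing $S$ with the configurations built from $A_1\cap A_2$ and $A_1\cup A_2$ and feeding in the whole family of semistability inequalities for $L$.

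The crux is precisely this two-column estimate, and its difficulty is twofold. First, for unequal columns the quotient $P/N$ couples the two orthogonal constituents $W_1, W_2$ of $M$ with \emph{different} projection lattices of $L$, so its covolume is not a product and must be estimated through the way the integral lattice $L$ is glued across its isotypic blocks — the rationalization splits as $\orthsum_i L_i$, but $L$ itself generally does not. Second, even granting such estimates, the passage from an arbitrary support $S$ to an aligned one is a genuine convexity/rearrangement problem; the clean reduction survives only for a grid of width two, and it is exactly the combinatorial proliferation of admissible supports once a third column is allowed that keeps Conjecture \ref{unbis} open in general. I therefore expect the entire weight of the argument to fall on the two-column uncrossing inequality, the representation-theoretic reduction of the first two steps being comparatively formal.
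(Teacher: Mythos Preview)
The statement you were given is Conjecture~\ref{unbis}, which the paper does \emph{not} prove: it remains open. What the paper establishes is the special case where both $L$ and $M$ are multiplicity-free and one of them has at most two irreducible components (Theorem~\ref{mt}). Your proposal silently imports exactly those extra hypotheses midway through, so what you are really attempting is Theorem~\ref{mt}, not the conjecture. That is fine, but you should be clear about it.

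Your representation-theoretic reduction is correct and matches the paper: invariance of the destabilizing sublattice under $\Aut(L)\times\Aut(M)$, together with multiplicity-freeness on both sides, forces the destabilizing subspace to be a union of blocks $E_i\otimes F_j$ (this is Proposition~\ref{mf}). The problem is what comes after.

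\medskip
\noindent\textbf{The gap.} You correctly identify the crux as the ``two-column uncrossing inequality'' and then do not prove it; you only assert that one ``reduces to the aligned situation by a single uncrossing step, comparing $S$ with the configurations built from $A_1\cap A_2$ and $A_1\cup A_2$''. No such comparison is carried out, and it is not clear that any monotonicity of slope under this rearrangement holds: the slope of a block-supported sublattice depends on the glue indices $[L:L_1\perp\cdots]$, $[M:M_1\perp\cdots]$ in a way that does not obviously respect uncrossing. Since this is, by your own account, where ``the entire weight of the argument'' lies, the proposal is not a proof but a plan whose hard step is missing.

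\medskip
\noindent\textbf{How the paper actually closes the argument.} The paper does \emph{not} use an uncrossing or convexity argument on the $r\times 2$ grid. Instead:
\begin{itemize}
\item It argues with a \emph{minimal} counterexample $(L,M)$. Proposition~\ref{reduction}\eqref{deux} then forces $\sum_i F_i=F$ and $\bigcap_i F_i=\{0\}$; in your language, $A_1\cup A_2=\{1,\dots,r\}$ and $A_1\cap A_2=\varnothing$. Combined with multiplicity-freeness on the $M$-side, this gives $F=F_1\perp F_2$, collapsing the picture to a genuine $2\times 2$ configuration. This minimality step is absent from your outline and is essential.
\item With $E=E_1\perp E_2$ and $F=F_1\perp F_2$, the paper bounds the index $x=[P:L_1\otimes M_1\perp L_2\otimes M_2]$ from above via Lemma~\ref{proj} and from below via the assumption $\mu(P)<\mu(L)\mu(M)$, obtaining $(m_1-m_2)(\ell_1-\ell_2)<0$.
\item The second, decisive input is \emph{duality}: the codestabilizing sublattice $P^{\sharp}$ of $(L\otimes M)^{*}$ lives in $U^{\perp}=E_1\otimes F_2\perp E_2\otimes F_1$, and the analogous index estimate on that side yields $(m_1-m_2)(\ell_2-\ell_1)<0$, contradicting the previous inequality.
\end{itemize}
So the paper's mechanism is ``minimal counterexample $+$ index bounds on both the destabilizer and its dual codestabilizer $\Rightarrow$ incompatible sign conditions'', not a rearrangement of block supports. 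If you want to pursue your uncrossing idea, you would at minimum need the minimality reduction of Proposition~\ref{reduction}\eqref{deux} (or an equivalent device) to cut the grid down, and then an honest inequality replacing the dual computation; neither is present in the proposal.
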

The inequality $\mu_{min}(L \otimes M) \leq \mu_{min}(L)\mu_{min}(M)$ is clear, so the question is whether this inequality can be strict. The equivalence between Conjecture \ref{bc} and its variation \ref{unbis} is well-known to the experts, although it is somewhat difficult to find a reference for this in the literature (a short argument is presented in Section \ref{group} Proposition \ref{reduction}). 

The hope that the known proofs of the tensor-multiplicativity for vector bundles over curves could inspire a proof of Conjecture 1 is probably overoptimistic, as it was very precisely analyzed by Y. Andr\'e in \cite{MR2872959}. From another viewpoint, this conjecture is reminiscent of the problem of the first minimum of a tensor product : denoting by $\lambda(L)$ the shortest length of a nonzero vector in a lattice $L$, it is clear that 
\begin{equation}\label{fm}
\lambda(L\otimes M) \leq \lambda(L)\lambda(M)
\end{equation}
for all lattices $L$ and $M$ but it is known, by a non constructive argument due to Steinberg, that in large dimensions, there exist lattices for which inequality \eqref{fm} is strict (see \cite[Theorem 9.6]{MR0506372}). On the opposite direction, Kitaoka showed that \eqref{fm} is an equality as long as $L$ or $M$ has dimension $\leq 42$ (see \cite{MR0444567}, \cite[chapter 7]{MR1245266}), which makes the construction of an explicit example of lattices $L$ and $M$ for which \eqref{fm} is not an equality quite difficult, if ever possible.

 This analogy would apparently advocate against Conjecture \ref{bc} : one could expect \eqref{star} to be true in small dimensions, but false in general for high dimensional lattices.
However, our understanding of the minimal slope of a tensor product is at the moment quite modest : the validity of \eqref{star} has been established only in very small dimensions by Bost and Chen \cite{MR3035951}, 
namely when $\dim L \dim M \leq 9$, through a very difficult proof ; on the other hand, almost no general result is known for higher dimensional lattices except some cases where \eqref{star} holds for trivial reasons (\textit{e.g.} when $L$ and $M$ are both unimodular). 
 
The two fundamental obstacles to overcome in order to prove or disprove equality \eqref{star} are the computation of the minimal slope of a lattice, which is difficult in general, and the complexity of the sublattices of a tensor product. These problems can nevertheless be notably simplified in the event that a sufficiently big automorphism group is available. For instance, if the automorphism group of either $L$ or $M$ acts absolutely irreducibly on the underlying space, then \eqref{star} is true (see \cite[Proposition A.3]{MR1423622} or \cite[Proposition 5.1]{MR3096565}). This is based on the crucial observation that the canonical filtration of a lattice is fixed by its automorphisms (see Theorem \ref{basics} \eqref{autinv} below). An intermediate situation is that of a multiplicity-free action. Indeed, this assumption guarantees that the lattices involved possess only finitely many sublattices (up to scalar multiples)
fixed by the corresponding automorphism groups. In this situation, the computation of the minimal slope becomes in principle tractable, whatever the dimension, since it amounts to the inspection of finitely many invariant sublattices. Moreover, if both lattices $L$ and $M$ are acted on multiplicity-free by their automorphism groups, then it is easily seen (see Proposition \ref{mf}) that the minimal slope of $L\otimes_{\mathcal{O}_K} M$ is achieved on a subspace $\mathcal{U}$ which is "split", \ie of the shape
\begin{equation}\label{split}
U =\bigoplus_{i=1}^rE_i\otimes F_i
\end{equation}
where the $E_i$s and $F_i$s are subspaces of $K L$ and $K M$ respectively.
This is of course a very specific situation, but nevertheless, even in that case, the verification of \eqref{star} is nontrivial, except if $r=1$. Thus, it seems natural to study the following special case of Conjecture \ref{bc}:
\begin{conj}\label{wbc}
	Let $L$ and $M$ be lattices acted on multiplicity-free by their automorphism groups. Then $\mu_{min}(L \otimes M) = \mu_{min}(L)\mu_{min}(M)$.
\end{conj}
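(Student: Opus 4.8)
The plan is to combine the two reductions already available, namely Proposition \ref{reduction} and Proposition \ref{mf}, with an explicit analysis of the \emph{gluing} of the tensor product along its isotypic decomposition; the whole problem then collapses onto a single family of arithmetic inequalities indexed by subsets of the set of irreducible constituents. By the argument behind Proposition \ref{reduction} it suffices to treat the semistable case: replacing $L$ and $M$ by the first steps of their canonical filtrations, which are again acted on multiplicity-free since submodules of a multiplicity-free module are multiplicity-free, I may assume $L$ and $M$ semistable and must show that $L\otimes M$ is semistable, i.e. that $\mu(N)\ge \mu(L)\mu(M)$ for every nonzero sublattice $N$. Write the orthogonal isotypic decompositions $KL=\bigoplus_{k=1}^m V_k$ and $KM=\bigoplus_{l=1}^n W_l$, put $L_k=L\cap V_k$, $M_l=M\cap W_l$, $d_k=\dim V_k$, $e_l=\dim W_l$, and set $\alpha=\log\mu(L)$, $\beta=\log\mu(M)$. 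Since the canonical filtration of $L\otimes M$ is $\Aut$-invariant (Theorem \ref{basics} \eqref{autinv}) and $\Aut(L)\times\Aut(M)$ acts on $KL\otimes KM=\bigoplus_{k,l}V_k\otimes W_l$, its maximal destabilizing sublattice is $\Aut$-invariant; Proposition \ref{mf} then lets me assume it is split, hence of the form $N_T=(L\otimes M)\cap\bigoplus_{(k,l)\in T}V_k\otimes W_l$ for some $T\subseteq\{1,\dots,m\}\times\{1,\dots,n\}$. (If the $V_k\otimes W_l$ fail to be irreducible over $K$ one first splits the relevant endomorphism algebras by base change; this does not affect what follows.)

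Next I would make the gluing explicit. For any $T$ set $g_T=\log\,[\,N_T:\bigoplus_{(k,l)\in T}L_k\otimes M_l\,]\ge 0$. Using $\log\vol(L_k\otimes M_l)=d_k e_l(\log\mu(L_k)+\log\mu(M_l))$ one obtains
\begin{equation*}
\log\vol N_T=\sum_{(k,l)\in T}d_k e_l\bigl(\log\mu(L_k)+\log\mu(M_l)\bigr)-g_T ,
\end{equation*}
so that, writing $c_k=\log\mu(L_k)-\alpha$ and $c'_l=\log\mu(M_l)-\beta$, the target bound $\mu(N_T)\ge \mu(L)\mu(M)$ becomes \emph{exactly} equivalent to
\begin{equation}\tag{$\star$}
g_T\ \le\ \sum_{(k,l)\in T}d_k e_l\,(c_k+c'_l).
\end{equation}
The semistability of $L$ and $M$ enters only through the companion constraints it imposes on the factor gluings: $c_k,c'_l\ge 0$, and, for all $S\subseteq\{1,\dots,m\}$ and $R\subseteq\{1,\dots,n\}$,
\begin{equation*}
\log[\,L\cap{\textstyle\bigoplus_{k\in S}}V_k:{\textstyle\bigoplus_{k\in S}}L_k\,]\le\sum_{k\in S}d_k c_k,\qquad \log[\,M\cap{\textstyle\bigoplus_{l\in R}}W_l:{\textstyle\bigoplus_{l\in R}}M_l\,]\le\sum_{l\in R}e_l c'_l ,
\end{equation*}
with equality when $S$ and $R$ are the full index sets. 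Tensoring the defining extension of the gluing of $L$ with $M$, and symmetrically, shows that $(\star)$ holds \emph{with equality} at the top set $T=\{1,\dots,m\}\times\{1,\dots,n\}$; this is the reassuring consistency check that $\mu(L\otimes M)=\mu(L)\mu(M)$.

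The entire content of Conjecture \ref{wbc} is thus the validity of $(\star)$ for \emph{arbitrary} $T$, and this is where I expect the main obstacle to sit. The difficulty is intrinsic: gluing can only make $N_T$ denser, so every bound obtained by comparing $N_T$ with a sum of sub-blocks, such as $\vol N_T\le\prod_{(k,l)\in T}\vol(L_k\otimes M_l)$ or the row/column intersections furnished by the easy $r=1$ case, points the wrong way and yields only \emph{upper} bounds on $\vol N_T$. The required \emph{lower} bound must instead be extracted from the twin facts that $N_T\subseteq L\otimes M$ and that $L,M$ are semistable. My plan is to decompose the total tensor-gluing group $(L\otimes M)/\bigoplus_{k,l}L_k\otimes M_l$ into a ``row'', a ``column'' and a ``mixed'' part, governed respectively by the gluing of $L$, the gluing of $M$, and their interaction, to localise each part at $T$, and to bound the $T$-part of each by the corresponding partial sum on the right-hand side of $(\star)$.

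When one of the factors has at most two irreducible constituents the admissible sets $T$ are so rigid that the mixed part is essentially forced and this localisation can be done by hand, which is precisely the range covered by the theorem announced in the abstract. For an unrestricted number of constituents the combinatorics of how the mixed gluing distributes over a general, non-rectangular subset $T$ is exactly the point that resists a direct treatment. A uniform proof of $(\star)$ would, I expect, have to proceed either by establishing a submodularity (or stronger structural) property of the set function $T\mapsto g_T$ that, combined with the top equality, dominates the manifestly modular right-hand side, or by an optimisation/LP-duality argument deducing $(\star)$ for every $T$ from the factor constraints together with that single equality at the top. Identifying and proving the correct such property is, in my view, the genuinely hard step.
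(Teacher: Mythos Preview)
The statement is Conjecture~\ref{wbc}, which the paper does \emph{not} prove in general; only the special case $\min(r,s)\le 2$ is established (Theorem~\ref{mt}). Your proposal is honest about this: it is a research programme, not a proof. Your reductions are correct --- the passage to the semistable case via Proposition~\ref{reduction}\eqref{un}, the splitting of the destabilizing subspace via Proposition~\ref{mf} (which indeed gives the form $\bigoplus_{(k,l)\in T}V_k\otimes W_l$ once both factors are multiplicity-free), and the reformulation of semistability of $L\otimes M$ as the family of inequalities~$(\star)$ --- and $(\star)$ is exactly the inequality that the paper manipulates in the two-block case (your $g_T$ is $\log x$, and the right-hand side of $(\star)$ is $\log(\alpha_1^{m_1}\beta_1^{\ell_1}\alpha_2^{m_2}\beta_2^{\ell_2})$, cf.\ \eqref{quatre}).

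Where your plan diverges from the paper is in the mechanism for proving $(\star)$. You propose to bound $g_T$ from above by ``localising'' the row/column/mixed gluing at $T$, and you assert that for $r=2$ this can be done by hand. In fact the paper does \emph{not} prove $(\star)$ directly, even when $r=2$. Instead it combines the upper bound \eqref{trois} on $x$ (coming from Lemma~\ref{proj}) with the negation of $(\star)$ to obtain the sign constraint \eqref{sign1}, and then runs the \emph{same} argument on the dual side, using that the co-destabilizing sublattice of $(L\otimes M)^*$ lives in $U^\perp$ (Proposition~\ref{dual}), to obtain the incompatible constraint \eqref{sign2}. This primal/dual symmetry is the actual engine of the proof; your framework, as written, looks only at the primal destabilizing lattice and does not mention the co-destabilizing side at all. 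A second ingredient you omit is Proposition~\ref{reduction}\eqref{deux}: in a minimal counterexample the set $T$ is heavily constrained (it projects surjectively onto both index sets, and no column is full), which for $r=2$ forces $T$ to be a ``diagonal'' and is what makes the dual computation tractable.

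So the genuine gap is not in the setup but in the endgame: your proposed submodularity/LP approach to $(\star)$ is speculative, and your claim that the two-block case follows from your localisation scheme underestimates the difficulty --- the paper needs the duality trick there. If you want to extend the paper's method rather than replace it, the natural move is to incorporate both Proposition~\ref{reduction}\eqref{deux} (to restrict the combinatorics of $T$) and the dual/co-destabilizing inequality into your framework, and to look for a higher-rank analogue of the incompatibility \eqref{sign1}--\eqref{sign2}.
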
 

Our main result is precisely a proof of the first nontrivial case of Conjecture \ref{wbc}, that is when $r=2$ in \eqref{split}. For technical reasons which will be made clear in Sections \ref{group} and \ref{main}, we have to restrict ourselves to totally real or CM-fields. These fields are the most important ones when dealing with representations of finite groups as all character fields are subfields of cyclotomic fields and hence totally real or CM-fields. Moreover all complex representations are equivalent to representations over such fields. In sum, we will establish the following result:

\begin{theo}
	Let $K$ denote either a totally real or a CM number field, and let $L$ and $M$ be $\mathcal{O}_K$-lattices acted on multiplicity-free by their automorphism groups, such that one of them has 
	at most two irreducible components. Then $\mu_{min}(L \otimes_{\mathcal{O}_K} M) = \mu_{min}(L)\mu_{min}(M)$.
\end{theo}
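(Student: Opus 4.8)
The plan is to reduce the statement, via Proposition~\ref{reduction} and Proposition~\ref{mf}, to a purely combinatorial inequality about the slopes of the split subspaces $U = (E_1\otimes F_1)\oplus(E_2\otimes F_2)$ that can carry the minimal slope of $L\otimes M$. By hypothesis one of the two lattices, say $L$, has at most two irreducible components under its automorphism group $G_L = \Aut(L)$; since the action is multiplicity-free, $KL$ decomposes as $V_1 \orthsum V_2$ with $V_1, V_2$ irreducible (or $KL$ is already irreducible, in which case the result is the known absolutely-irreducible case). The $G_L$-invariant sublattices of $L$ are then, up to scaling, among $0, L\cap V_1, L\cap V_2, L$, so $\mu_{min}(L)$ is computed by comparing the slopes of $L$, $L\cap V_1$ and $L\cap V_2$. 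Writing $L_i = L\cap V_i$, one has the exact-sequence/volume relation $\vol(L) = \vol(L_1)\vol(L_2)\cdot c$ for a covolume-defect factor $c\le 1$ coming from the index of $L_1\orthsum L_2$ in $L$, and semistability of $L$ translates into explicit inequalities $\mu(L)\le\mu(L_1)$, $\mu(L)\le\mu(L_2)$.

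Next I would analyze the candidate destabilizing subspaces of $L\otimes M$. Since $\Aut(L\otimes M)\supseteq \Aut(L)\otimes\Aut(M)$ fixes the canonical filtration (Theorem~\ref{basics}\eqref{autinv}), and $M$ is acted on multiplicity-free, Proposition~\ref{mf} forces the minimal-slope subspace to be split; with $KL = V_1\orthsum V_2$ the only split subspaces involving a nontrivial mix are of the form $U = (V_1\otimes F_1)\oplus(V_2\otimes F_2)$ with $F_1, F_2$ ranging over $\Aut(M)$-invariant subspaces of $KM$ (again finitely many, up to scaling, by multiplicity-freeness). The corresponding sublattice of $L\otimes M$ is $N = (L_1\otimes M_1)\oplus(L_2\otimes M_2)$ up to finite index, where $M_i = M\cap F_i$. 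Using $\mu(A\otimes B) = \mu(A)\mu(B)$ for the slope of a pure tensor (which follows from $\vol(A\otimes B) = \vol(A)^{\dim B}\vol(B)^{\dim A}$) and the behaviour of slopes under orthogonal-ish direct sums, the slope of $N$ is a weighted geometric mean of $\mu(L_1)\mu(M_1)$ and $\mu(L_2)\mu(M_2)$, times a defect factor $\le 1$. The goal is to show this is always $\ge \mu_{min}(L)\mu_{min}(M) = \mu(L)\mu(M)$ whenever $L$ and $M$ are semistable, i.e.\ when all the $\mu(L_i)\ge\mu(L)$ and all $\mu(M_j)\ge\mu(M)$ for the invariant sublattices, with the index/defect factors controlled.

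The heart of the argument, and the step I expect to be the main obstacle, is that the naive bound $\mu(N)\ge$ (weighted mean of $\mu(L_i)\mu(M_i)$) is \emph{not} immediately $\ge\mu(L)\mu(M)$: the weighted geometric mean of $\mu(L_1)\mu(M_1)$ and $\mu(L_2)\mu(M_2)$ can in principle dip below $\mu(L)\mu(M)$ because the weights (dimensions $\dim V_i\dim F_i$) are not the same as those governing $\mu(L) = \mu(L_1)^{t}\mu(L_2)^{1-t}\cdot(\text{defect})$. One must therefore exploit more than just the two inequalities $\mu(L_i)\ge\mu(L)$: the precise semistability constraints on $L$ (relating $\mu(L_1),\mu(L_2),\mu(L)$ and the index $[L:L_1\orthsum L_2]$) and on $M$ (over \emph{all} its invariant sublattices $M_j$, not only $M_1, M_2$) must be combined. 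This is exactly where the restriction to $K$ totally real or CM enters: it guarantees that complex conjugation acts on $\End(KL)$ compatibly with the Hermitian form, so that the invariant sublattices come in dual pairs and the defect factors satisfy an arithmetic-geometric-mean-type estimate. I would set up the optimization as: minimize $\mu(N)/(\mu(L)\mu(M))$ over the finite poset of pairs $(\{L_i\}, \{F_j\})$ subject to the semistability linear inequalities (in the logarithmic coordinates $\log\mu$), and show the minimum is $\ge 1$, attained only in the trivial/unstable boundary cases — a finite linear-programming argument whose feasibility region is pinned down by the $r\le 2$ hypothesis on $L$ and the multiplicity-free structure of $M$.
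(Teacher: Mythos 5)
Your setup is the same as the paper's: reduce to semistable $K$-rational lattices via Proposition \ref{reduction}, use multiplicity-freeness and Proposition \ref{mf} to force the destabilizing subspace into the split form $U=E_1\otimes F_1\perp E_2\otimes F_2$ with $F=F_1\perp F_2$, and then compare the slope of $P=U\cap(L\otimes M)$, which contains $L_1\otimes M_1\perp L_2\otimes M_2$ with finite index, against $\mu(L)\mu(M)$. You also correctly locate the difficulty: the weighted geometric mean of $\mu(L_1)\mu(M_1)$ and $\mu(L_2)\mu(M_2)$, with weights $\ell_im_i$, need not dominate $\mu(L)\mu(M)$ on the basis of semistability alone. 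But at exactly that point your proposal stops being a proof: the ``finite linear-programming argument'' is asserted, not carried out, and with the constraints you list it does not close. Writing $\alpha_i=(\mu(L_i)/\mu(L))^{\ell_i}$, $\beta_i=(\mu(M_i)/\mu(M))^{m_i}$, $a=\alpha_1\alpha_2=[L:L_1\perp L_2]$, $b=\beta_1\beta_2$, the semistability constraints $\alpha_i,\beta_i\ge 1$ together with the projection bound $[P:L_1\otimes M_1\perp L_2\otimes M_2]\le\min(a^{m_1}b^{\ell_1},a^{m_2}b^{\ell_2})$ (Lemma \ref{proj}\eqref{aaa}) only force a hypothetical counterexample to satisfy $(m_1-m_2)(\ell_1-\ell_2)<0$ --- a perfectly feasible region, so the one-sided optimization has no contradiction in it.

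The missing idea is the second, dual half of the argument. The paper runs the entire computation a second time on $U^{\perp}=E_1\otimes F_2\perp E_2\otimes F_1$, which by Proposition \ref{dual} and Remark \ref{rk2} underlies the co-destabilizing sublattice $P^{\sharp}$ of $(L\otimes M)^{*}$; bounding the index of $\pi_{U^{\perp}}(L\otimes M)$ in $\pi_1L\otimes\pi_2'M\perp\pi_2L\otimes\pi_1'M$ from above by the surjections of Lemma \ref{proj}\eqref{bbb} and from below by the co-destabilizing property yields the opposite sign condition $(m_1-m_2)(\ell_2-\ell_1)<0$, and the two together are contradictory. Any completion of your plan must incorporate these dual constraints; without them the LP is feasible. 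Relatedly, your explanation of the totally real/CM hypothesis (``invariant sublattices come in dual pairs and the defect factors satisfy an AM--GM-type estimate'') is not what the paper uses: the hypothesis enters (i) to reduce to $K$-rational lattices by strong approximation (Lemma \ref{approx}), and (ii) to realize $L^{*}$ and $M^{*}$ inside the same Hermitian spaces so that $M^{\sharp}=L^{*}\cap F^{\perp}$ and the whole $U^{\perp}$ computation can be done with orthogonal projections. So the skeleton is right, but the step that actually proves the theorem is absent.
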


This result, and more generally Conjecture 2 (if true), partly offsets the impression, based on the analogy with the first minimum, that Conjecture 1 could be false. \medskip

Here is an outline of the paper: in Section \ref{prel} we recall, essentially without proofs, the basics about slopes of lattices. General considerations about tensor products, group actions and slope filtration are developed in Section \ref{group}. The final section is devoted to the proof of our main result.

\section*{Acknowledgement}
The first author thanks Ga\"el R\'emond for his valuable explanations about slope filtrations during the conversations he had with him on this topic. The authors also express their sincere gratitude to the anonymous referee for his valuable comments who helped to improve the presentation of the paper.

This material is partially based upon work supported by the National Science Foundation under Grant No. DMS-1439786 while the first author was in residence at the Institute for Computational and Experimental Research in Mathematics in Providence, RI, during the Spring 2018 semester.
\section*{Notation} 

 To avoid confusions between ordinary and orthogonal direct sums, we will use the symbol $\perp$ for the latter and $\oplus$ for the former. 
\section{Preliminaries on ${\mathcal O}_K$-lattices and slopes}\label{prel}

General references for this introductory section are \cite{MR0424707,MR780079,MR2127941,MR2571693,MR2872959}. 

Let $K$ be a number field and denote by 
${\mathcal O}_K$ its ring of integers. We first recall the basic notation and results for ${\mathcal O}_K$-lattices (see \cite{MR780079}) or equivalently Hermitian vector bundles over Spec$({\mathcal O}_K)$ in the terminology of \cite{MR3035951}. 

Let $E$ be a  vector space over $K$ of dimension, say, $\ell \in \NN $ such that 
for all infinite places $\sigma : K \hookrightarrow \C $ the complex vector space
$E_{\sigma } := E \otimes _K \C $ comes equipped with a Hermitian form $h_{\sigma }$. 
Then a ${\mathcal O}_K$-lattice or equivalently 
a Hermitian vector bundle over Spec$({\mathcal O}_K)$ in $E$ is the data $(L, (h_{\sigma } )_{\sigma } )$ of a finitely generated 
projective  ${\mathcal O}_K$-submodule $L$ of $E$ containing a $K$-basis of $E$ together with the collection of Hermitian forms $h_{\sigma}$. 
The dimension of $E$ is also called the rank  of $L$, $\rk (L) := \ell $. It follows from the well-known classification of modules over Dedekind rings  (see \eg \cite[Th. 81:3]{MR1754311}) that any ${\mathcal O}_K$-lattice admits a \textit{pseudo-basis} $\left(\mathfrak a_i, b_i\right) _{1 \leq i \leq \ell}$, where $\mathfrak a_1, \dots \mathfrak a_{\ell}$ are fractional ideals, and $\left\lbrace b_1, \dots, b_{\ell} \right\rbrace$ is a $K$-basis of $E$ such that
\begin{equation}\label{psb} 
L=\sum_{i=1}^\ell \mathfrak a_i b_i.
\end{equation} 
Following Grayson we adopt the multiplicative notation and define the 
{\em volume} of $L:=(L, (h_{\sigma } )_{\sigma } ) $ as 
$$\vol (L) = \vol ((L, (h_{\sigma } )_{\sigma } ) = \N\left( \prod_{i=1}^{\ell}\mathfrak a _i \right) \prod _{\sigma } 
\det(h_{\sigma }(b_i,b_j))^{e_{\sigma}/2}  $$
where $e_{\sigma}=1$ or $2$ according to whether $\sigma$ is real or complex.
\begin{remark}\label{rk1}
	If $K$ is either a totally real or a CM field, then one can consider ${\mathcal O}_K$-lattices of a particular shape : suppose that the $K$-vector space $E$ is endowed with a totally positive definite Hermitian form $h$, \ie $h$ is a $K$-valued Hermitian form on $E$ such that the extensions ${\sigma}(h)$ of $h$ to all completions $E_{\sigma}$ at infinite places are positive definite. Then, any full-rank  finitely generated 
	projective  ${\mathcal O}_K$-submodule $L$ of $E$ inherits a structure of ${\mathcal O}_K$-lattice $(L,(h_{\sigma })_{\sigma }) $ 
with  $h_{\sigma} :=\sigma(h)$. In what follows, we call ${\mathcal O}_K$-lattices of that type \emph{$K$-rational ${\mathcal O}_K$-lattices}.
\end{remark} 
In the sequel, in accordance with the convention in \cite{MR780079}, the word sublattice stands for "primitive" (or "pure") sublattice (recall that a sub ${\mathcal O}_K$-module $M$ of 
an ${\mathcal O}_K$-module $L$ is primitive if the quotient $L/M$ is torsion free, or equivalently if $M$ is a direct summand of $L$). 
We will have to consider in places non primitive submodules of a given lattice $L$, in particular submodules of finite index, and will consequently refrain from using the term "sublattice" in such situations. 
	
If $M$ is a sublattice of $L$, then the quotient $L/M$ is also an ${\mathcal O}_K$-lattice, 
the inner products being inherited from the identification of $(L/M \otimes K) \otimes _{\sigma } \C $ with $((M\otimes K)\otimes_{\sigma } \C)^{\perp}$. 
By a morphism of lattices, we mean a morphism $f$ of the underlying ${\mathcal O}_K$-modules, with operator norm $ \leq 1$.
	
The notion of exact sequence is defined accordingly. For instance, whenever $M$ is a sublattice of $L$, we have an exact sequence
		\[ 0\longrightarrow M\longrightarrow L \longrightarrow L \slash M\longrightarrow 0 .\]
In full generality, the dual $L^{\vee}=\Hom_{{\mathcal O}_K}(L, {\mathcal O}_K)$ of an ${\mathcal O}_K$-lattice in $E$ can be viewed as an ${\mathcal O}_K$-lattice in $E^{\vee}=\Hom_K(E,K)$, for suitable Hermitian metrics $h^{ \vee} _{\sigma}$
characterized by the property
\begin{equation}
h^{ \vee}_{\sigma}(y^{\vee},y^{\vee})=\sup_{0 \neq x \in E}\dfrac{\vert y^{\vee}(x)\vert_{\sigma}^2}{h_{\sigma}(x,x)} \ \text{ for all } y \in E.
\end{equation}
In the particular situation of Remark \ref{rk1}, that is when $K$ is either a totally real or a CM field and the underlying $K$-vector space $E$ is 
endowed with a totally positive definite $K$-rational
Hermitian form $h$, then one can alternately define the dual  of an ${\mathcal O}_K$-lattice in $E$ as
\begin{equation}
L^{*}= \left\lbrace y \in E \mid h\left(L,y\right) \subset {\mathcal O}_K \right\rbrace.
\end{equation}
With this point of view, $L$ and its dual $L^{*}$ are ${\mathcal O}_K$-lattices with the same underlying vector space $E$ (one readily checks that $L^{*}$ and $L^{\vee}$ are isomorphic as ${\mathcal O}_K$-lattices). 

\begin{defn}
\begin{enumerate}
	\item 	The \emph{slope} 
of a lattice $L$ is defined as 
\[ 	\mu (L)=\left( \vol L\right)^{1/\rk L} 
. \]
\item The \emph{minimal} and \emph{maximal} slopes of a lattice are defined as 
 \[ \mu_{min} (L)=\min_{{0} \neq M \subset L} \mu (M) \]
and
\[ \mu_{max}(L)=\max_{N \subsetneq L} \mu(L\slash N) .\]
\end{enumerate}
\end{defn}
	\bigskip
	
We collect in the following lemma some elementary properties of slopes, for further reference. 
\begin{lemma} \label{elem}Let $L$, $M$, and $N$ be ${\mathcal O}_K$-lattices of rank $\ell = \rk (L)$, $m=\rk(M)$, and $n=\rk (N)$.
\begin{enumerate}
	\item If $M$ is an ${\mathcal O}_K$-submodule of $L$ of finite index, then $\ell = m$ and $\left[L : M\right]=\left( \dfrac{\mu(M)}{\mu(L)} \right)^{\ell}$.
	\item $\mu(L^{\vee })=\mu(L)^{-1}$ .
	\item $\mu (L \otimes M) = \mu (L) \mu (M)$ .
	\item \label{es} For any exact sequence 
		\[ 0\longrightarrow M\longrightarrow L \longrightarrow N \longrightarrow 0 \]
	one has 
		\[ \mu(L)^{\ell }= \mu (M) ^{m}\mu (N)^{n} . \]		
	Consequently, 
 \[ 	\min(\mu(M),\mu(N))	\leq \mu(L) \leq \max(\mu(M),\mu(N)) \]
	and both inequalities are strict, unless $\mu(M)=\mu(N)=\mu(L)$.
	\item In particular
	\begin{align*}
		\mu(L/M)^{\ell-m}&=\mu(L)^{\ell}\mu (M)^{-m}\\
		\mu (M \perp N)^{m+n}&= \mu (M) ^{m}\mu (N)^{ n}. 
	\end{align*}
\end{enumerate}
\end{lemma}
A slightly less immediate property of slopes is the following inequality, which we call the "parallelogram constraint", following Grayson :

 \begin{lemma}\cite[Proposition 2]{MR0424707}\cite[Theorem 12]{MR780079}\label{para}
 	If $L_1$ and $L_2$ are sublattices of a lattice $L$, one has :
 	\[ \mu (L_1\slash L_1 \cap L_2 )\geq \mu (L_1+L_2\slash L_2) .\]
 \end{lemma}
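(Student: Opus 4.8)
The plan is to exhibit a canonical $\mathcal{O}_K$-module isomorphism between $L_1/(L_1\cap L_2)$ and $(L_1+L_2)/L_2$ which, at every infinite place, is induced by an orthogonal projection and is therefore norm non-increasing; the volume, and hence the slope, can then only drop from source to target. First I would set up the module map. By the second isomorphism theorem the composite $L_1\hookrightarrow L_1+L_2\twoheadrightarrow (L_1+L_2)/L_2$ has kernel $L_1\cap L_2$ and induces an isomorphism of $\mathcal{O}_K$-modules
$$\phi : L_1/(L_1\cap L_2)\xrightarrow{\ \sim\ }(L_1+L_2)/L_2.$$
Both quotients are genuine lattices: $L_1\cap L_2$ is primitive in $L$ since $L/(L_1\cap L_2)$ embeds diagonally in the torsion-free module $L/L_1\oplus L/L_2$, and $L_2$ is primitive in $L_1+L_2$ since $(L_1+L_2)/L_2$ embeds in $L/L_2$. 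Moreover $\phi$ carries a pseudo-basis $(\mathfrak a_i,b_i)$ of the source to a pseudo-basis $(\mathfrak a_i,\phi(b_i))$ of the target with the same fractional ideals $\mathfrak a_i$, so the ideal-norm factors in the two volume formulas coincide and the whole comparison reduces to the archimedean determinant factors.

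Next I would analyse $\phi$ place by place. Fix an infinite place $\sigma$ and write $V_i := (L_i)\otimes_\sigma\C$ inside $V := L\otimes_\sigma\C$. The quotient metrics recalled in Section \ref{prel} identify the complexification of $L_1/(L_1\cap L_2)$ with $W_1 := V_1\ominus (V_1\cap V_2)$ and that of $(L_1+L_2)/L_2$ with $W_2 := (V_1+V_2)\cap V_2^{\perp}$, the orthogonal complements being taken inside $V_1$ and $V_1+V_2$ respectively (here $V_2^{\perp}$ denotes the complement of $V_2$ in $V$). Under these identifications $\phi_\sigma$ sends $w\in W_1\subseteq V_1$ to its orthogonal projection $P_{V_2^{\perp}}(w)\in W_2$; since orthogonal projection is norm non-increasing, $\phi_\sigma$ has operator norm at most $1$. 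Thus $\phi$ is a morphism of lattices in the sense of the excerpt which happens to be a module isomorphism.

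Finally I would convert this norm bound into a volume inequality. Writing $G^A=(h_\sigma(b_i,b_j))$ and $G^B=(h_\sigma(\phi b_i,\phi b_j))$ for the two Gram matrices at $\sigma$, the estimate $\|\phi_\sigma\|\le 1$ reads exactly $G^B\le G^A$ as positive-definite Hermitian matrices, whence $\det G^B\le\det G^A$ by monotonicity of the determinant. Combined with the equality of the ideal-norm factors noted above, this yields $\vol\big((L_1+L_2)/L_2\big)\le\vol\big(L_1/(L_1\cap L_2)\big)$, and since the two lattices have equal rank Lemma \ref{elem} gives $\mu\big((L_1+L_2)/L_2\big)\le\mu\big(L_1/(L_1\cap L_2)\big)$, as desired.

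I expect the main obstacle to be the second step: one must check carefully that the quotient metrics prescribed in Section \ref{prel} (identification of a quotient lattice with the orthogonal complement of the sub-object in each complexification) make $\phi_\sigma$ literally the restriction to $W_1$ of the orthogonal projection $P_{V_2^{\perp}}$, so that the norm non-increase is genuinely available. The remaining ingredients — the second isomorphism theorem, the monotonicity $P\le Q\Rightarrow\det P\le\det Q$ for positive-definite Hermitian matrices, and the cancellation of the ideal-norm contributions to the volumes — are routine.
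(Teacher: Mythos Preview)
Your argument is correct and is precisely the standard proof found in the references the paper cites; the paper itself does not supply a proof of Lemma~\ref{para} beyond those citations to Stuhler and Grayson. Your concern in the final paragraph is unwarranted: the quotient metric is \emph{defined} in Section~\ref{prel} via the identification with the orthogonal complement, so $\phi_\sigma$ is literally the restriction to $W_1$ of $P_{V_2^{\perp}}$, and the norm bound is immediate.
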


	The invariant $\mu_{min}$ (or $\mu_{max}$) induces a canonical filtration, starting with the so-called \emph{destabilizing sublattice}. We collect below without proofs some properties of this filtration, which we name \emph{Grayson-Stuhler filtration} in reference to its discoverers (the proofs essentially rely on repeated applications of Lemma \ref{para}).
	\begin{theorem}\cite[Satz 1]{MR0424707}	\cite{MR780079}\label{basics}
		Let $L$ be an ${\mathcal O}_K$-lattice in $E$.
	\begin{enumerate} 
		\item The set of sublattices $M\subset L$ such that $\mu(M)=\mu_{min}(L)$, admits a \emph{maximum}, with respect to inclusion, called the \emph{destabilizing sublattice of $L$ }.
		\item The set of sublattices $N \subset L$ such that $\mu(L\slash N)=\mu_{max}(L)$ admits a \emph{minimum}, with respect to inclusion, called the \emph{co-destabilizing sublattice of $L$ }.
		\item To any lattice $L$ is associated a \emph{canonical filtration}
		\[ \left\lbrace 0 \right\rbrace=L_{(0)} \subsetneq L_{(1)} \subsetneq L_{(2)} \subsetneq \dots \subsetneq L_{(m)}=L \] 
		defined recursively by the conditions that
		\begin{enumerate}
			\item $L_{(1)}$ is the destabilizing sublattice of $L$.
			\item $\left\lbrace 0 \right\rbrace=L_{(1)}\slash L_{(1)} \subsetneq L_{(2)}\slash L_{(1)} \subsetneq\dots \subsetneq L_{(m)}\slash L_{(1)} = L\slash L_{(1)}$ is the canonical filtration of $L\slash L_{(1)}$.
		\end{enumerate}
		\item With the previous notation, $L_{(1)}$ and $L_{(m-1)}$ are respectively the destabilizing and co-destabilizing sublattices of $L$.
		\item \label{autinv}The canonical filtration is invariant under any automorphism of $L$.
		\item\label{seis} One has $\mu(L_{(i)}\slash L_{(i-1)}) < \mu(L_{(i+1)}\slash L_{(i)})$ for any $1 \leq i \leq m-1$ and this set of inequalities characterizes the canonical filtration. 
	\end{enumerate}
	\end{theorem}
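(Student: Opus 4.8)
The entire proof runs on two tools from the preliminaries: the parallelogram constraint (Lemma~\ref{para}) and the multiplicativity of slopes along exact sequences (Lemma~\ref{elem}\,(4)). I would prove the six assertions in the order (1), (6), (2), (4), (3), (5), since the destabilizing sublattice and the strict increase of the slopes are the genuine content and everything else is formal. For (1), the plan is to show that the set $\mathcal S$ of nonzero sublattices $M$ with $\mu(M)=\mu_{min}(L)$ is stable under sums; a member of $\mathcal S$ of maximal rank, call it $L_{(1)}$, is then the sought maximum, because for any $M\in\mathcal S$ the sum $L_{(1)}+M$ again lies in $\mathcal S$, has rank at least $\rk L_{(1)}$, and hence, being pure, equals $L_{(1)}$. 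To prove stability, take $L_1,L_2\in\mathcal S$ and set $\mu_{min}=\mu_{min}(L)$; the cases $L_1\cap L_2=0$ (use Lemma~\ref{elem}\,(5)) and $L_i\subseteq L_j$ being immediate, assume $0\subsetneq L_1\cap L_2\subsetneq L_1$. Since $L_1\cap L_2$ is a sublattice of $L$ we have $\mu(L_1\cap L_2)\ge\mu_{min}$; inserting this into the exact sequence $0\to L_1\cap L_2\to L_1\to L_1/(L_1\cap L_2)\to0$ and using $\mu(L_1)=\mu_{min}$ forces $\mu(L_1/(L_1\cap L_2))\le\mu_{min}$. The parallelogram constraint then gives $\mu((L_1+L_2)/L_2)\le\mu(L_1/(L_1\cap L_2))\le\mu_{min}$, and feeding this into $0\to L_2\to L_1+L_2\to(L_1+L_2)/L_2\to0$ yields $\mu(L_1+L_2)\le\mu_{min}$; the reverse inequality being automatic, $L_1+L_2\in\mathcal S$.

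The heart of (6) is the single strict inequality $\mu_{min}(L)<\mu_{min}(L/L_{(1)})$. I would prove it by contradiction: a nonzero sublattice $\bar P\subset L/L_{(1)}$ with $\mu(\bar P)\le\mu_{min}(L)$ would lift to its preimage $P\supsetneq L_{(1)}$, and the exact sequence $0\to L_{(1)}\to P\to\bar P\to0$ together with $\mu(L_{(1)})=\mu_{min}(L)$ gives $\mu(P)\le\mu_{min}(L)$, hence $\mu(P)=\mu_{min}(L)$; but then $P\in\mathcal S$ strictly contains the maximal element $L_{(1)}$, a contradiction. As $L_{(2)}/L_{(1)}$ is by definition the destabilizing sublattice of $L/L_{(1)}$, this reads $\mu(L_{(1)})<\mu(L_{(2)}/L_{(1)})$, and iterating over the quotients $L/L_{(i)}$ produces the whole increasing chain. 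For the converse characterization I would show, by induction on the length and the same exact-sequence bookkeeping, that in any filtration whose successive quotients are semistable with strictly increasing slopes the bottom term is forced to be the destabilizing sublattice (a minimal-slope sublattice cannot project nontrivially onto a higher, strictly steeper quotient), after which one passes to the quotient.

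Assertions (2) and (4) I would obtain by duality. The annihilator correspondence $N\mapsto(L/N)^{\vee}$ is an inclusion-reversing bijection between quotients of $L$ and sublattices of $L^{\vee}$ with $\mu(L/N)=\mu((L/N)^{\vee})^{-1}$ (Lemma~\ref{elem}\,(2)), so $\mu_{max}(L)=\mu_{min}(L^{\vee})^{-1}$ and the minimal $N$ achieving $\mu_{max}(L)$ corresponds to the destabilizing sublattice of $L^{\vee}$ from (1); this is (2). For (4), dualizing the canonical filtration of $L^{\vee}$ term by term gives a filtration of $L$ whose successive quotients are the duals, in reverse order, of those of $L^{\vee}$; these remain semistable with strictly increasing slopes, so by the characterization in (6) this dual filtration is the canonical filtration of $L$, and its penultimate term is exactly the annihilator of the destabilizing sublattice of $L^{\vee}$, i.e. the co-destabilizing sublattice of $L$. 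Assertion (3) only records that the recursion terminates, the ranks increasing strictly, and (5) is immediate from (1): an automorphism $\phi$ preserves rank and volume, hence slope and $\mu_{min}$, so $\phi(L_{(1)})\in\mathcal S$ and $\phi(L_{(1)})=L_{(1)}$ by uniqueness of the maximum; as $\phi$ descends to an automorphism of $L/L_{(1)}$ one finishes by induction on the rank.

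The main obstacle is concentrated in the two places where Lemma~\ref{para} does real work, namely the stability of $\mathcal S$ under sums in (1) and the strict increase in (6); both are short once the slope and exact-sequence bookkeeping is set up correctly. The reconciliation of the bottom-up and top-down descriptions in (4) is conceptually the subtlest point, but it becomes routine once the characterization in (6) is available.
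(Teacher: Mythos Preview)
The paper does not actually give a proof of this theorem: it collects the statements, cites Stuhler and Grayson, and only remarks parenthetically that ``the proofs essentially rely on repeated applications of Lemma~\ref{para}.'' Your proposal is precisely that standard argument, organized correctly, so there is no discrepancy of approach to discuss.

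One small technical wrinkle worth tightening in part~(1): in the paper's convention a ``sublattice'' is primitive, and the module sum $L_1+L_2$ of two primitive sublattices need not be primitive in $L$. Your sentence ``the reverse inequality being automatic'' therefore does not apply to $L_1+L_2$ directly. The fix is immediate: replace $L_1+L_2$ by its saturation $\widetilde{N}=L\cap K(L_1+L_2)$; since $\widetilde{N}\supseteq L_1+L_2$ with finite index one has $\mu(\widetilde{N})\le\mu(L_1+L_2)\le\mu_{min}$, and now $\widetilde{N}$ is a genuine sublattice, so $\mu(\widetilde{N})=\mu_{min}$ and $\widetilde{N}\in\mathcal S$ contains both $L_1$ and $L_2$. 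Your maximality argument then goes through verbatim with $\widetilde{N}$ in place of $L_1+L_2$. Aside from this, the proposal is sound; the only other implicit assumption is that $\mu_{min}(L)$ is actually attained (a finiteness/compactness statement proved in \cite{MR780079}), which the paper also takes for granted in its definition.
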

	
	Note that, since a sublattice $M$ of $L$ is entirely determined by the subspace $F =KM := M\otimes _{{\mathcal O}_K} K $ it generates in $E=KL$, the canonical filtration can be viewed as a filtration by vector \emph{subspaces} of the underlying vector space $E$, namely 
	 \[ \left\lbrace 0 \right\rbrace \subsetneq F_{(1)} \subsetneq F_{(2)} \subsetneq \dots \subsetneq F_{(m)}=E \] 
	where $ F_{(i)}=K L_{(i)}$ and, conversely, $L_{(i)}=L \cap F_{(i)}$.
One can consequently speak of the destabilizing (resp. co-destabilizing) \emph{subspaces} of $E$ with respect to $L$.

Although we won't develop further this point of view, one should also point out the geometric interpretation of this canonical filtration, in terms of the associated \emph{canonical polygon}, which justifies the terminology of slope (resp. minimal and maximal slope) see 	\cite{MR780079}.\medskip
	
The notion of semistability can be formulated using the previous proposition :
\begin{defn}
 A lattice $L$ is semistable if it satisfies one of the following equivalent conditions :
 \begin{enumerate}
 	\item $\mu(M) \geq \mu(L)$ for every sublattice $M$ of $L$ .
 	\item $L$ coincides with its destabilizing sublattice $L_{(1)}$.
 	\item The co-destabilizing sublattice $L_{(m-1)}$ of $L$ is reduced to $\left\lbrace0\right\rbrace$.
 \end{enumerate}
\end{defn}

We now review the properties of $\mu$ and $\mu_{min}$ with respect to quotients and duality.

 If $M$ is any sublattice of $L$ , then to the exact sequence
 \[ 0\longrightarrow M\longrightarrow L \longrightarrow L \slash M\longrightarrow 0 \] corresponds an exact sequence
 \[ 0\longrightarrow \left( L \slash M \right)^{\vee } \longrightarrow L^{\vee } \longrightarrow M^{\vee }\longrightarrow 0 .\] 
 
Consequently, setting $M^{\sharp}:= \im \left( \left( L \slash M \right)^{\vee } \longrightarrow L^{\vee } \right)$ , we have
\begin{equation}
\mu(L^{\vee }\slash M^{\sharp}) = \mu(M^{\vee })=\mu(M)^{-1}. 
\end{equation}

As a consequence, we get the following proposition :
\begin{proposition}\label{dual}
	The map 
	\[ M \mapsto M^{\sharp}:= \im \left( \left( L \slash M \right)^{\vee } \longrightarrow L^{\vee } \right) \]
	induces a bijection between the sets of sublattices of $L$ and $L^{\vee }$ respectively, which exchanges the destabilizing and co-destabilizing sublattices of $L$ and $L^{\vee }$. 
	
	In particular, \[ \mu_{max}(L^{\vee })=(\mu_{min}(L))^{-1}. \] More generally, if 
	\[ \left\lbrace 0 \right\rbrace=L_{(0)} \subsetneq L_{(1)} \subsetneq L_{(2)} \subsetneq \dots \subsetneq L_{(m)}=L \] 
	is the canonical filtration of $L$ then
	\[ \left\lbrace 0 \right\rbrace = L_{(m)}^{\sharp}\subsetneq L_{(m-1)}^{\sharp} \subsetneq L_{(m-2)}^{\sharp} \subsetneq\dots \subsetneq L_{(1)}^{\sharp} \subsetneq L_{(0)}^{\sharp}=L^{\vee } \] 
	is that of $L^{\vee }$. 
	In particular, $L$ is semistable if and only if $L^{\vee }$ is.
\end{proposition}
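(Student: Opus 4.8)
The plan is to reduce everything to the order-reversing bijection $F\mapsto F^{\perp}$ between subspaces of $E=KL$ and subspaces of $E^{\vee}=KL^{\vee}$, and then to transport the combinatorics of the canonical filtration across it using the slope identity $\mu(L^{\vee}/M^{\sharp})=\mu(M)^{-1}$ recorded just above the statement.

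First I would pin down the underlying subspace of $M^{\sharp}$. Since $M^{\sharp}$ is by construction the kernel of the surjection $L^{\vee}\to M^{\vee}$, the quotient $L^{\vee}/M^{\sharp}\cong M^{\vee}$ is torsion free, so $M^{\sharp}$ is a genuine (primitive) sublattice of $L^{\vee}$. Extending scalars to $K$, the map $(L/M)^{\vee}\to L^{\vee}$ becomes precomposition with $E\to E/KM$, whose image is exactly the annihilator $(KM)^{\perp}$; hence $KM^{\sharp}=(KM)^{\perp}$ and, $M^{\sharp}$ being primitive, $M^{\sharp}=L^{\vee}\cap(KM)^{\perp}$. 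Because a sublattice is determined by the subspace it spans (via $M=L\cap KM$), the map $M\mapsto M^{\sharp}$ is the composite of the three bijections ``sublattices of $L\leftrightarrow$ subspaces of $E$'', ``$F\mapsto F^{\perp}$'', and ``subspaces of $E^{\vee}\leftrightarrow$ sublattices of $L^{\vee}$'', and is therefore bijective and order reversing. Applying the same construction to $L^{\vee}$ and using biduality $(L^{\vee})^{\vee}\cong L$ together with $((KM)^{\perp})^{\perp}=KM$ shows that the sharp map of $L^{\vee}$ inverts that of $L$, so that $(M^{\sharp})^{\sharp}=M$; this settles the bijection claim.

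The key technical step is an isomorphism of lattices for a nested pair $M\subseteq M'\subseteq L$, namely
\[ M^{\sharp}/M'^{\sharp}\;\cong\;(M'/M)^{\vee}. \]
To obtain it I would dualize the exact sequence $0\to M'/M\to L/M\to L/M'\to 0$, producing $0\to(L/M')^{\vee}\to(L/M)^{\vee}\to(M'/M)^{\vee}\to 0$; identifying $(L/M)^{\vee}=M^{\sharp}$ and $(L/M')^{\vee}=M'^{\sharp}$ as sublattices of $L^{\vee}$ (the composite $(L/M')^{\vee}\hookrightarrow(L/M)^{\vee}\hookrightarrow L^{\vee}$ is dual to $L\twoheadrightarrow L/M'$, hence has image $M'^{\sharp}$) identifies the inclusion with $M'^{\sharp}\hookrightarrow M^{\sharp}$ and the cokernel with $(M'/M)^{\vee}$. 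I expect this to be the main obstacle, since one must check that the duality of exact sequences is compatible not merely with the underlying $\mathcal{O}_K$-modules but with the induced Hermitian metrics on the quotients, so that the identification is truly an isomorphism of lattices; all the slope computations below rest on this point.

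Granting this, the rest is bookkeeping with Theorem~\ref{basics}\,\eqref{seis}. Writing $N_{(j)}:=L_{(m-j)}^{\sharp}$ for the reversed chain, the identification together with Lemma~\ref{elem}(2) gives $\mu(N_{(j)}/N_{(j-1)})=\mu(L_{(m-j+1)}/L_{(m-j)})^{-1}$. Since the canonical filtration of $L$ satisfies $\mu(L_{(i)}/L_{(i-1)})<\mu(L_{(i+1)}/L_{(i)})$, taking inverses reverses the inequalities, so the successive quotient slopes of $N_{(\bullet)}$ are strictly increasing; by the characterization in Theorem~\ref{basics}\,\eqref{seis} the chain $N_{(\bullet)}$ is the canonical filtration of $L^{\vee}$, which is exactly the asserted reversal. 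The first and last proper steps then read off the destabilizing/co-destabilizing exchange, $L_{(m-1)}^{\sharp}$ being destabilizing and $L_{(1)}^{\sharp}$ co-destabilizing for $L^{\vee}$. For the slope formula I would argue directly: as $M$ runs over the nonzero sublattices of $L$, $M^{\sharp}$ runs over the proper sublattices of $L^{\vee}$, and $\mu(M)=\mu(L^{\vee}/M^{\sharp})^{-1}$, whence $\mu_{min}(L)=(\max_{N\subsetneq L^{\vee}}\mu(L^{\vee}/N))^{-1}=\mu_{max}(L^{\vee})^{-1}$. Finally, since the canonical filtrations of $L$ and $L^{\vee}$ have the same length $m$, one is trivial iff the other is, giving the equivalence of semistability.
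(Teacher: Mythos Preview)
Your argument is correct and is essentially the proof the paper leaves implicit: the paper states the proposition as an immediate ``consequence'' of the slope identity $\mu(L^{\vee}/M^{\sharp})=\mu(M)^{-1}$ and does not spell out any further details. Your route---identifying $KM^{\sharp}=(KM)^{\perp}$ to get the order-reversing bijection, then proving $M^{\sharp}/M'^{\sharp}\cong(M'/M)^{\vee}$ and invoking the characterization of the canonical filtration by strictly increasing successive-quotient slopes in Theorem~\ref{basics}\,\eqref{seis}---is exactly the natural way to flesh out that one-line justification, and all steps are sound.

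One small remark: the point you flag as the ``main obstacle'', namely that dualizing the short exact sequence $0\to M'/M\to L/M\to L/M'\to 0$ is compatible with the induced Hermitian metrics, is indeed the only substantive check, but it is a standard property of the category of Hermitian ${\mathcal O}_K$-lattices (duality is an exact contravariant functor carrying the quotient metric to the sub metric and vice versa), used already in the paper's derivation of the dual exact sequence preceding the proposition. You are right to isolate it, and once it is granted your bookkeeping with Theorem~\ref{basics}\,\eqref{seis} is clean and complete.
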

\begin{remark}\label{rk2}
		In the situation of Remark \ref{rk1}, \ie when $K$ is a totally real or CM number field and $L$ is a $K$-rational ${\mathcal O}_K$-lattice, then one can identify $L \slash M$ with $\pi_{F^{\perp}}(L)$, where $F=KM$ and orthogonality is with respect to the Hermitian inner product on $E=KL$. Under this identification, the lattice $M^{*}$ identifies with $\pi_F(L^{*})$ and $M^{\sharp}$ with $L^{*}\cap F^{\perp}$.
\end{remark}

We end this section with a last useful property of $\mu_{min}$ with respect to quotients, communicated to the first author by Ga\"el R\'emond :
\begin{lemma}\label{x}
	If $M \subset L$ then 	$$\mu_{min}(M) \geq \mu_{min}(L) \geq \min(\mu_{min}(M) ,\mu_{min}(L/M)).$$
	In particular, if $\mu_{min}(M) \leq \mu_{min}(L/M)$ then $\mu_{min}(L)=\mu_{min}(M)$.	
\end{lemma}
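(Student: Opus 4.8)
The plan is to establish the two inequalities separately and then read off the ``in particular'' clause. The left-hand inequality $\mu_{min}(M)\ge\mu_{min}(L)$ is the easy one: I would choose a nonzero sublattice $N\subseteq M$ with $\mu(N)=\mu_{min}(M)$ and let $\overline N=KN\cap L$ be its primitive closure in $L$. Then $N$ has finite index in $\overline N$, so Lemma \ref{elem}(1) gives $\mu(\overline N)\le\mu(N)$, and since $\overline N$ is a sublattice of $L$ we get $\mu_{min}(L)\le\mu(\overline N)\le\mu(N)=\mu_{min}(M)$. (Equivalently: the infimum defining $\mu_{min}$ does not change if taken over all nonzero submodules rather than only primitive ones, after which the inequality is immediate since submodules of $M$ are submodules of $L$.)

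For the right-hand inequality, let $N$ be the destabilizing sublattice of $L$, so that $\mu(N)=\mu_{min}(L)$ (Theorem \ref{basics}), and put $N_0=N\cap M$. Being an intersection of two primitive submodules, $N_0$ is primitive in $L$, hence in both $N$ and $M$, so the quotient lattice $N/N_0$ is defined and $\mu(N_0)\ge\mu_{min}(M)$ whenever $N_0\ne 0$. The composite $N\hookrightarrow L\twoheadrightarrow L/M$ is a morphism of lattices with kernel $N_0$, and therefore induces an injective morphism $\varphi\colon N/N_0\to L/M$; since $\varphi$ has operator norm $\le 1$, the metric it induces on its image is dominated by the one transported from $N/N_0$, so $\mu(\varphi(N/N_0))\le\mu(N/N_0)$, and replacing $\varphi(N/N_0)$ by its primitive closure in $L/M$ (Lemma \ref{elem}(1) once more) yields $\mu(N/N_0)\ge\mu_{min}(L/M)$. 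One could instead deduce $\mu(N/N_0)\ge\mu((N+M)/M)\ge\mu_{min}(L/M)$ directly from the parallelogram constraint, Lemma \ref{para}, with $L_1=N$ and $L_2=M$. The degenerate cases $N\subseteq M$ and $N\cap M=0$ are immediate; otherwise, feeding these two bounds into the slope identity of Lemma \ref{elem}(4) for the exact sequence $0\to N_0\to N\to N/N_0\to 0$ gives
\[
\mu_{min}(L)^{\rk N}=\mu(N)^{\rk N}=\mu(N_0)^{\rk N_0}\,\mu(N/N_0)^{\rk(N/N_0)}\ge\min\bigl(\mu_{min}(M),\mu_{min}(L/M)\bigr)^{\rk N},
\]
which is the desired inequality.

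The ``in particular'' statement then follows at once: if $\mu_{min}(M)\le\mu_{min}(L/M)$, the chain $\mu_{min}(M)\ge\mu_{min}(L)\ge\min(\mu_{min}(M),\mu_{min}(L/M))=\mu_{min}(M)$ collapses to equalities throughout. The only points that need genuine care — though they are routine — are the primitivity bookkeeping (each time a submodule arises as an image or as a sum, one must pass to its primitive closure, which can only lower the slope, by Lemma \ref{elem}(1)) and the compatibility of the quotient metric on $N/N_0$ used in Lemma \ref{elem}(4) with the one for which $\varphi$ is norm-decreasing; invoking the parallelogram constraint is a clean way to sidestep the latter subtlety.
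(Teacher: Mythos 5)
Your proof is correct and follows essentially the same route as the paper's: take the destabilizing sublattice $N$ of $L$, form the exact sequence $0\to N\cap M\to N\to N/(N\cap M)\to 0$, and bound $\mu(N/(N\cap M))$ below by $\mu_{min}(L/M)$ via the parallelogram constraint (Lemma \ref{para}). The only differences are presentational — you write out the multiplicative slope identity and take the minimum directly where the paper argues by cases, and you spell out the (trivial) left-hand inequality and the primitivity bookkeeping that the paper leaves implicit.
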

\begin{proof}
	Let $L'$ be the destabilizing sublattice of $L$; one has the short exact sequence
	\begin{equation}
	0 \rightarrow L' \cap M \rightarrow L' \rightarrow L'\slash L' \cap M\rightarrow 0
	\end{equation}
	from which we derive the inequality
	\begin{equation}
	\mu_{min}(L)=\mu(L') \geq \min \left(\mu(L'\cap M), \mu (L'\slash L' \cap M )\right).
	\end{equation}
	On the other hand, from Lemma \ref{para}, one has
	\begin{equation}
	\mu (L'\slash L' \cap M )\geq \mu (L'+M\slash M)\geq \mu_{min}(L\slash M).
	\end{equation} 
	Finally, 
	\begin{itemize}
		\item 	if $\mu(L'\cap M)\leq \mu (L'\slash L' \cap M )$, then $\mu_{min}(L)=\mu(L') \geq \mu(L'\cap M) \geq \mu_{min}(M)$, 
		\item if $\mu(L'\cap M)\geq \mu (L'\slash L' \cap M )$, then $\mu_{min}(L)=\mu(L') \geq \mu (L'\slash L' \cap M) \geq \mu (L'+M\slash M)\geq \mu_{min}(L\slash M)$. 
	\end{itemize}
	In all cases, one has 
	$$\mu_{min}(M) \geq \mu_{min}(L) \geq \min(\mu_{min}(M) ,\mu_{min}(L/M)).$$ 	
\end{proof}	

\section{Grayson-Stuhler filtration in the presence of a group action}\label{group}
This section, still preparatory, gives some structural properties of the destabilizing sublattice of lattices endowed with group actions and their tensor products. 

\subsection{Review on the tensor product of group representations}\label{two}

\begin{defn}\label{abstmult} Let $K$ be a field and $G$ a group.
	\begin{enumerate}
		\item A $K[G]$-module $E$ is \emph{absolutely irreducible} if the $\overline{K}[G]$-module $E_{\overline{K}}:=E\otimes_K \overline{K}$
			is irreducible. 
			Here $\overline{K}$ denotes an algebraic closure of $K$.
		\item A $K[G]$-module $E$ is is \emph{multiplicity-free} if it splits as a direct sum $E=\bigoplus_{i=1}^r E_i$ of pairwise non isomorphic \emph{absolutely irreducible} $K[G]$-submodules $E_i$.
	\end{enumerate}
\end{defn}
The following proposition is certainly classical. We nevertheless include it, together with its proof, for sake of completeness.
\begin{proposition}\label{mf}
	Let $G$, $H$ be finite groups, and $K$ a field of characteristic either zero or prime to both $\vert G\vert$ and $\vert H \vert$. Let $E$ be a finite dimensional $K[ G]$-module, $F$ a finite dimensional $K[H]$-module. 
	
	If $E$ is a multiplicity-free $K[G]$-module, \ie $E=\bigoplus_{i=1}^r E_i$ where the $E_i$s are pairwise non isomorphic absolutely irreducible $K[G]$-submodules, then for any $K[G \times H]$- submodule $U$ of $E\otimes F$, there exist 
	$K [H]$-submodules $F_1$, \dots, $F_r$ of $F$ (possibly zero)
	such that
	\begin{equation*}
	U=\bigoplus_{i=1}^rE_i\otimes F_i .
	\end{equation*}
	
\end{proposition}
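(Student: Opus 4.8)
The plan is to decompose $E \otimes F$ using the multiplicity-free structure of $E$ and track where a $K[G\times H]$-submodule $U$ can sit. First I would write $E = \bigoplus_{i=1}^r E_i$ with the $E_i$ pairwise non-isomorphic absolutely irreducible $K[G]$-modules, so that
\begin{equation*}
E \otimes F = \bigoplus_{i=1}^r (E_i \otimes F).
\end{equation*}
The key point is that, because $K$ has characteristic zero or prime to $|G|$, each $E_i \otimes F$ is a $K[G\times H]$-submodule, and these summands are the $E_i$-isotypic components of $E\otimes F$ \emph{as a $K[G]$-module} (here $H$ acts trivially from the $G$-side): indeed $E_i \otimes F \cong E_i^{\oplus \dim_D F}$ after a suitable base field consideration, and since the $E_i$ are absolutely irreducible and pairwise inequivalent, $\operatorname{Hom}_{K[G]}(E_i, E_j) = 0$ for $i\neq j$. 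By semisimplicity (Maschke), any $K[G\times H]$-submodule $U$ — being in particular a $K[G]$-submodule — is the direct sum of its intersections with the isotypic components, i.e. $U = \bigoplus_{i=1}^r (U \cap (E_i \otimes F))$.

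So it suffices to treat each summand separately: I must show that a $K[G\times H]$-submodule $U_i$ of $E_i \otimes F$ has the form $E_i \otimes F_i$ for some $K[H]$-submodule $F_i \subseteq F$. Here I would use that $E_i$ is \emph{absolutely irreducible}, hence $\operatorname{End}_{K[G]}(E_i) = K$ by Schur's lemma in its strong form. Then the canonical map
\begin{equation*}
\operatorname{Hom}_{K[G]}(E_i, E_i \otimes F) \longrightarrow F, \qquad \varphi \mapsto (\text{evaluation against a fixed nonzero } e \in E_i)
\end{equation*}
is an isomorphism of $K[H]$-modules (the natural iso $\operatorname{Hom}_{K[G]}(E_i, E_i \otimes_K F) \cong \operatorname{End}_{K[G]}(E_i)\otimes_K F \cong F$), and it is $H$-equivariant. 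Under the reconstruction $E_i \otimes F \cong E_i \otimes \operatorname{Hom}_{K[G]}(E_i, E_i\otimes F)$, a $K[G]$-submodule $U_i$ corresponds to $E_i \otimes W_i$ where $W_i = \operatorname{Hom}_{K[G]}(E_i, U_i)$; this is because $U_i$, as a $K[G]$-module, is a sum of copies of $E_i$, and the multiplicity space of a semisimple isotypic module is recovered functorially. Finally, $U_i$ being a $K[G\times H]$-submodule forces $W_i$ to be $H$-stable, so $F_i := W_i$ (viewed inside $F$ via the isomorphism above) is the desired $K[H]$-submodule, and $U_i = E_i \otimes F_i$.

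Assembling, $U = \bigoplus_{i=1}^r E_i \otimes F_i$ with each $F_i$ a (possibly zero) $K[H]$-submodule of $F$, which is the assertion. The main obstacle — really the only subtle point — is making precise and correct the identification of the multiplicity space of $E_i$ in $E_i\otimes F$ with $F$ as an $H$-module, and checking that the $H$-action is compatible on the nose: one must be careful that $\operatorname{Hom}_{K[G]}(E_i, -)$ is exact (semisimplicity again) and that the $H$-action it carries, coming from the $H$-action on the target, matches the given $H$-action on $F$ under the natural isomorphism $\operatorname{Hom}_{K[G]}(E_i, E_i \otimes_K F) \cong \operatorname{End}_{K[G]}(E_i) \otimes_K F$. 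The absolute irreducibility is exactly what guarantees $\operatorname{End}_{K[G]}(E_i) = K$ rather than a larger division algebra, so that this Hom-space is literally $F$ and not $D \otimes_K F$ for some skew field $D$; without it one would only get $F_i$'s that are $K[H]$-submodules after extending scalars to $D$, and the clean statement would fail. Everything else is a routine application of Maschke's theorem and Schur's lemma.
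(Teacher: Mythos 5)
Your proof is correct, and it rests on exactly the same pivotal fact as the paper's: absolute irreducibility of $E_i$ gives $\End_{K[G]}(E_i)=K\Id$ rather than a larger division algebra, and semisimplicity (Maschke) lets you split off submodules. The organization differs, though. The paper first proves that $E'\otimes F'$ is $G\times H$-irreducible for $E'$ absolutely irreducible and $F'$ irreducible, decomposes $F$ into its $H$-isotypic components $\overline{F}_j$, identifies the $E_i\otimes\overline{F}_j$ as the $G\times H$-isotypic components of $E\otimes F$, and then realizes a $G\times H$-submodule of $E_i\otimes\overline{F}_j$ as the image of a projector $p\in\End_{K[G\times H]}(E_i\otimes\overline{F}_j)\simeq K\otimes\End_{K[H]}(\overline{F}_j)$, whence $p=\lambda\otimes f$ and $U=E_i\otimes f(\overline{F}_j)$. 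You instead never touch the $H$-module structure of $F$: you split $U$ along the $G$-isotypic components $E_i\otimes F$ of $E\otimes F$ (restricting to $G\times 1$), use the multiplicity-space functor $\Hom_{K[G]}(E_i,-)\cong F$ to see that every $K[G]$-submodule of $E_i\otimes F$ is $E_i\otimes W$ for a unique subspace $W\subseteq F$, and then let uniqueness force $H$-stability of $W$. This is slightly leaner (no preliminary irreducibility lemma, no isotypic decomposition of $F$) and makes transparent why the $F_i$ need not be irreducible or disjoint. One cosmetic slip: the map $\varphi\mapsto\varphi(e)$ for fixed $e\in E_i$ lands in $E_i\otimes F$, not in $F$; the correct identification is the natural isomorphism $\Hom_{K[G]}(E_i,E_i\otimes F)\cong\End_{K[G]}(E_i)\otimes F\cong F$ that you state immediately afterwards, so nothing is lost.
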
	

Note that the $E_i$s and $F_i$s play asymmetrical roles in the above statement : in particular, the $F_i$s are not irreducible in general, even if $F$ is multiplicity free as a $K[H]$-module, and may intersect each other non trivially.
	
\begin{proof}[Proof of the proposition]
Let us first observe that if $E'$ is an absolutely irreducible $K[G]$-module and $F'$ an irreducible $K[H]$-module, then $E'\otimes F'$ is $G\times H$-irreducible over $K$. Indeed, under these assumptions, one has $\End_{K[G]}(E') = K \Id_{E'}$ (see \eg \cite[Theorem 3.43]{MR632548}) and $\End_{K[H]}(F') = D$ is a division algebra over $K$ (Schur's lemma), so that $\End_{K[G\times H]}(E'\otimes F') = K \Id_{E'}\otimes D \simeq D$, whence we deduce that $E'\otimes F'$ is $G\times H$-irreducible over $K$, since the algebra $K[G\times H]$ is semisimple. 

Let now $F=\oplus_j \overline{F}_j$ be the decomposition of $F$ into $H$-isotypic components (note that the $\overline{F}_j$ are not $H$-irreducible in general).
From the previous observation, we deduce easily that the subspaces $E_i \otimes \overline{F}_j$ are the $G \times H$-isotypic components of $E \otimes F$. The proposition will be finally proved if we can show that any $G \times H$-stable subspace $U$ of $E_i \otimes \overline{F}_j$ is equal to $E_i \otimes F_j$ for a suitable $H$-stable subspace $F_j$ of $\overline{F}_j$. The semisimplicity of $K[G\times H]$ insures that such a subspace $U$ admits a $G\times H$-invariant complement $V$, and the projector $p$ corresponding to the direct sum decomposition $U \oplus V$ is an element of $\End_{K[G\times H]} (E_i \otimes \overline{F}_j) \simeq \End_{K[G]} (E_i) \otimes \End_{K[H]}(\overline{F}_j) = K \otimes \End_{K[H]}(\overline{F}_j) $. In other words, $p=\lambda \otimes f$, with $\lambda \in K$ and $f \in \End_{K[H]}(\overline{F}_j)$, so that $U=p(E_i \otimes \overline{F}_j)=E_i \otimes f(\overline{F}_j)$ is of the required form $E_i\otimes F_j$, setting $F_j=f(\overline{F}_j)$.
\end{proof}

\subsection{Application to the canonical filtration of a tensor product} \label{2dot2}
We will apply the above general considerations to derive some properties of the destabilizing sublattice of a tensor product. The automorphism group of an ${\mathcal O}_K$-lattice 
$(L,(h_{\sigma})_{\sigma }) $ consist on the ${\mathcal O}_K$-module automorphisms of $L$ 
 that additionally preserve all the Hermitian inner products $h_{\sigma}$. 
	For instance, if $K$ is either a totally real or a CM number field and $L$ is a $K$-rational ${\mathcal O}_K$-lattice, in the sense of Remark \ref{rk1}, then its automorphism group consists on the unitary automorphisms of the underlying Hermitian space $KL$ that fix the ${\mathcal O}_K$-module $L$.
	
Let $G$ and $H$ be the automorphism groups of $L$ and $M$.
	 When dealing with Conjecture \ref{bc}, we make no further assumption on $G$ and $H$ (they might even be trivial), while in the case of Conjecture \ref{wbc}, we assume that $G$ and $H$ act multiplicity-free on the underlying vector spaces $E=K L$ and $F=K M$ respectively. Note also that, in this context, since $G$ and $H$ consist of unitary automorphisms, the isotypic components of $E$ and $F$ are mutually orthogonal with respect to the inner products $h_{\sigma}$.

The next proposition, of interest in itself, shows in particular that it is enough to consider Conjecture \ref{bc} and \ref{wbc} for semistable lattices (in other words : Conjecture \ref{bc} and \ref{unbis} are equivalent, as already mentioned in the introduction).

\begin{proposition}\label{reduction}
	Let $(L,M)$ be a \emph{minimal} counterexample to either Conjecture \ref{bc} or Conjecture \ref{wbc} (by "minimal", we mean : "such that $\dim L + \dim M$ is minimal"). 
	Denote by $E$ and $F$ respectively the vector spaces spanned by $L$ and $M$ respectively, and by $U$ the destabilizing subspace of $E\otimes F$ with respect to $L\otimes M$. Then :
	\begin{enumerate}
		\item \label{un}$L$ and $M$ are semistable.
		\item \label{deux} Assume moreover that $K$ is either a totally real or a CM number field, and  that $L$ and $M$ are $K$-rational ${\mathcal O}_K$-lattices. If $U$ splits as $U=\bigoplus_{i=1}^rE_i\otimes F_i$ where the $E_i$s are pairwise orthogonal subspaces of $E$, and the $F_i$s are subspaces of $F$, then $$\sum_{i=1}^{r} E_i =E, \quad \sum_{i=1}^{r}F_i =F \text{ and } \bigcap_{i=1}^{r} F_i =\left\lbrace 0 \right\rbrace.$$
	\end{enumerate}
\end{proposition}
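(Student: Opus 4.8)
The plan is to prove both statements by a minimality argument, exploiting Lemma~\ref{x} to split off a proper tensor factor whenever one of the structural conditions fails.

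\smallskip

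\textbf{Part \eqref{un}.} Suppose, say, $L$ is not semistable, and let $L_{(1)}$ be its destabilizing sublattice, so $0 \subsetneq L_{(1)} \subsetneq L$ with $\mu(L_{(1)}) = \mu_{min}(L) < \mu(L)$. Since $\mu_{min}$ is clearly multiplicative on direct summands in the sense that it behaves well under the exact sequence, I would apply Lemma~\ref{x} to the sublattice $L_{(1)} \otimes M \subset L \otimes M$: its quotient is $(L/L_{(1)}) \otimes M$. By Lemma~\ref{elem}, tensoring by $M$ multiplies slopes by $\mu(M)$, and since $\mu_{min}(L) = \mu(L_{(1)})$ is the strictly smallest slope occurring among sublattices of $L$, one expects $\mu_{min}(L_{(1)} \otimes M) \le \mu_{min}((L/L_{(1)}) \otimes M)$; more precisely, by minimality of the counterexample applied to the pairs $(L_{(1)}, M)$ and $(L/L_{(1)}, M)$ — both of smaller total dimension — we get $\mu_{min}(L_{(1)} \otimes M) = \mu_{min}(L_{(1)})\mu_{min}(M) = \mu_{min}(L)\mu_{min}(M)$ and similarly $\mu_{min}((L/L_{(1)}) \otimes M) = \mu_{min}(L/L_{(1)})\mu_{min}(M) \ge \mu_{min}(L)\mu_{min}(M)$, using $\mu_{min}(L/L_{(1)}) \ge \mu_{min}(L)$ from Theorem~\ref{basics}\eqref{seis}. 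Hence by the "in particular" clause of Lemma~\ref{x}, $\mu_{min}(L \otimes M) = \mu_{min}(L_{(1)} \otimes M) = \mu_{min}(L)\mu_{min}(M)$, contradicting that $(L,M)$ is a counterexample. The same argument works if $M$ is not semistable. (One must be slightly careful that if $L_{(1)}$ or $L/L_{(1)}$ has the same dimension as $L$ this is vacuous — but both are proper nonzero sublattices, so their dimensions are strictly smaller, and for Conjecture~\ref{wbc} one also needs that sublattices and quotients of a multiplicity-free module are again multiplicity-free, which is immediate since submodules of a multiplicity-free semisimple module are direct sums of a subset of the irreducible constituents.)

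\smallskip

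\textbf{Part \eqref{deux}.} Now $L$, $M$ are semistable and $K$-rational, $U = \bigoplus_{i=1}^r E_i \otimes F_i$ is the destabilizing subspace of $E \otimes F$, with the $E_i$ pairwise orthogonal. Set $E' = \sum_i E_i$ and let $L' = L \cap E'$. The point is that $U \subseteq E' \otimes F$, so the destabilizing sublattice of $L \otimes M$ is a sublattice of $L' \otimes M$; by Lemma~\ref{x}, $\mu_{min}(L \otimes M) \ge \mu_{min}(L' \otimes M)$, and combined with the trivial inequality $\mu_{min}(L \otimes M) \le \mu_{min}(L' \otimes M) \cdot \tfrac{\mu_{min}(L\otimes M)}{\mu_{min}(L'\otimes M)}$... — more directly: the destabilizing sublattice of $L\otimes M$ equals the destabilizing sublattice of $L' \otimes M$, so $\mu_{min}(L \otimes M) = \mu_{min}(L' \otimes M)$. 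If $E' \subsetneq E$ then by minimality $\mu_{min}(L' \otimes M) = \mu_{min}(L')\mu_{min}(M)$, and since $L'$ is a sublattice of the semistable lattice $L$ we have $\mu_{min}(L') \ge \mu_{min}(L) = \mu(L)$, while also $\mu_{min}(L') \le \mu(L') $; but in fact $\mu_{min}(L') \ge \mu_{min}(L)$ gives $\mu_{min}(L\otimes M) \ge \mu_{min}(L)\mu_{min}(M)$, which together with the trivial reverse inequality forces equality, again a contradiction. Hence $\sum_i E_i = E$. For $\sum_i F_i = F$: apply the analogous argument with $F' = \sum_i F_i$, using that $U \subseteq E \otimes F'$. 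For $\bigcap_i F_i = \{0\}$: here I would pass to duals. Using Remark~\ref{rk2} and Proposition~\ref{dual}, the co-destabilizing subspace of $E \otimes F$ with respect to $(L\otimes M)^* = L^* \otimes M^*$ is $U^\perp = \bigoplus_i E_i \otimes F_i^\perp$ (using the orthogonality of the $E_i$ and that $(L\otimes M)$ semistability is equivalent to that of its dual); since $L \otimes M$ being a counterexample means it is \emph{not} semistable, $U \neq E \otimes F$, equivalently $U^\perp \neq \{0\}$, and the minimum over $i$ of the $F_i$ being all of $F$ would force — hmm, rather: if $\bigcap_i F_i \ne \{0\}$, pick $0 \ne y \in \bigcap_i F_i$; then $E \otimes Ky \subseteq U$, so $U \supseteq E \otimes F''$ with $F'' = \bigcap_i F_i \ne 0$, and one runs the minimality argument on the complementary quotient $M/(M\cap F'')$ — wait, more cleanly, consider $M'' = M \cap F''$; then $L \otimes M''$ is a sublattice of the destabilizing sublattice of $L\otimes M$, and since $L \otimes M$ is semistable as a sublattice-of-itself... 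I would instead argue: the quotient $(E\otimes F)/U$ corresponds to the destabilizing subspace of the dual, and $\bigcap F_i \ne 0$ would make $U$ contain the "full" factor $E \otimes F''$, so that the quotient lattice $(L \otimes M)/(L \otimes M'')$, which is a quotient of the co-destabilizing picture, has its minimal slope controlled by $\mu_{min}(L \otimes (M/M''))$; invoking minimality on $(L, M/M'')$ and $(L, M'')$ and semistability of $L$ and $M$ (so $\mu_{min}(M'') \le \mu(M'') $ and $\mu_{max}(M/M'') \ge \mu(M/M'')$ with the usual slope inequalities) forces $U$ to not be destabilizing unless $U = E \otimes F$, i.e. unless $L \otimes M$ is semistable — contradiction.

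\smallskip

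\textbf{Main obstacle.} The delicate point is the last assertion $\bigcap_i F_i = \{0\}$: unlike $\sum E_i = E$ and $\sum F_i = F$, which follow by restricting to a genuine tensor sublattice $L' \otimes M$ or $L \otimes M'$ and applying minimality directly, the intersection condition is really a statement about the quotient / dual side, and getting the bookkeeping of slopes right — in particular correctly identifying which lattice plays the role of "$L \otimes M''$" versus "$L \otimes (M/M'')$" and checking that the relevant dimension drops so that minimality applies — requires care. I expect one cleanly handles it by dualizing: $(L \otimes M)^* = L^* \otimes M^*$ is again a counterexample (by Proposition~\ref{dual}, semistability is self-dual, and $\mu_{min}$ of a dual is $\mu_{max}^{-1}$), its destabilizing subspace is $U^\perp = \bigoplus_i E_i \otimes F_i^\perp$ (orthogonality of the $E_i$ is exactly what makes this work, and it is guaranteed since $G$ acts by unitaries), and now the condition $\bigcap_i F_i = \{0\}$ becomes $\sum_i F_i^\perp = F$, i.e. the "$\sum$" condition for the dual counterexample — which we have already proved. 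This symmetry between the two sums and the intersection, mediated by duality, is the crux and the part most likely to need a carefully written-out argument.
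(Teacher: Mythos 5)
Your proposal follows essentially the same route as the paper: part \eqref{un} is exactly the paper's argument (apply Lemma \ref{x} to $L_{(1)}\otimes M\subset L\otimes M$, use minimality on $(L_{(1)},M)$ and $(L/L_{(1)},M)$, and invoke Theorem \ref{basics}\eqref{seis}), and part \eqref{deux} is handled as in the paper by restricting to $\hat{L}\otimes\hat{M}$ with $\hat{E}=\sum E_i$, $\hat{F}=\sum F_i$ for the two spanning conditions, and by dualizing --- turning $\bigcap_i F_i=\{0\}$ into $\sum_i F_i^{\perp}=F$ via $U^{\perp}=\bigoplus_i E_i\otimes F_i^{\perp}$ --- for the intersection condition. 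The one imprecision is your final-paragraph claim that $U^{\perp}$ \emph{is} the destabilizing subspace of $L^{*}\otimes M^{*}$ (it is the co-destabilizing one, as you correctly state earlier); this is harmless because the destabilizing subspace $V$ of $L^{*}\otimes M^{*}$ is \emph{contained} in $U^{\perp}\subseteq E\otimes\tilde{F}$, and the spanning argument only ever uses such a containment, which is precisely how the paper runs it.
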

\noindent \emph{Remark.} The situation in Proposition \ref{reduction} \eqref{deux} 
is of course very specific : there is a priori no reason that the destabilizing subspace of $E\otimes F$ with respect to $L\otimes M$ admit a splitting of the form $U=\bigoplus_{i=1}^r E_i\otimes F_i$ in general. 

\begin{proof}
\begin{enumerate}
	\item The reduction to the semistable case is classical and was explained to the first author by Ga\"el R\'emond. We reproduce the argument here, by lack of an appropriate reference. Assume, by way of contradiction, that the pair $(L,M)$ violate either Conjecture \ref{bc} or \ref{wbc}, and that $L$ is not semistable. Let $L' \subsetneq L$ be the destabilizing sublattice of $L$. Thanks to Lemma \ref{x} , we have:
\begin{equation}\label{cex}
	\mu_{min}(L'\otimes M)\geq \mu_{min}(L\otimes M) \geq \min\left( \mu_{min}(L'\otimes M),\mu_{min}(L \slash L' \otimes M ) \right).
\end{equation}
We deduce from the minimality assumption on the pair $(L,M)$ that both $(L',M)$ and $(L\slash L',M)$ satisfy Conjecture \ref{bc} or \ref{wbc} respectively (note that for the latter, we use the fact that the multiplicity free assumption is preserved for $G$-submodules and quotients). Consequently, 
\[ \mu_{min}(L'\otimes M)=\mu_{min}(L')\mu_{min}(M)=\mu_{min}(L)\mu_{min}(M) \]
and 
\[ \mu_{min}(L \slash L' \otimes M)=\mu_{min}(L \slash L' )\mu_{min}(M). \]
We also know from Theorem \ref{basics} \eqref{seis} that $\mu_{min}(L \slash L' ) > \mu_{min}(L)$ if $L'$ is the destabilizing sublattice of $L$ ; together with \eqref{cex}, this implies that 
\[ \mu_{min}(L\otimes M) =\mu_{min}(L)\mu_{min}(M) , \]
contradicting our initial assumption.
\item Let $\hat{E}=\sum_{i=1}^{r} E_i$, $\hat{F}=\sum_{i=1}^{r}F_i$, $\hat{L}=L\cap\hat{E}$, and $\hat{M}=M\cap\hat{F}$. The sublattice $\hat{L}\otimes \hat{M}$ is primitive, \ie $\hat{L}\otimes \hat{M} =(L\otimes M) \cap \hat{E} \otimes \hat{F} $, and since $U \subset \hat{E} \otimes \hat{F}$, one has 
\[ \left(L \otimes M\right) \cap U=\left(\hat{L} \otimes\hat{M}\right) \cap U .\]
By the very definition of $U$ we get
\begin{equation}\label{mu1}
\mu_{min}(L \otimes M)=\mu(L \otimes M \cap U)=\mu(\hat{L} \otimes \hat{M} \cap U).
\end{equation}

Assuming that $(L,M)$ violates Conjecture \ref{bc}, we have 
\begin{equation*} 
\mu_{min}(L)\mu_{min}(M) >\mu_{min}(L \otimes M)
\end{equation*}
so that \eqref{mu1}, together with the trivial inequality
\begin{equation*} 
\mu(\hat{L} \otimes \hat{M} \cap U) \geq \mu_{min}(\hat{L} \otimes \hat{M})
\end{equation*}
implies that
\begin{equation}\label{mu3}
\mu_{min}(L)\mu_{min}(M) >\mu_{min}(\hat{L} \otimes \hat{M}).
\end{equation}
Yet, if we assume that either $\hat{E}\neq E$ or $\hat{F}\neq F$, then by the minimality assumption for the pair $(L,M)$, we infer that $(\hat{L}, \hat{M})$ satisfies Conjecture \ref{bc}, so that
\begin{equation}\label{mu4}
\mu_{min}(\hat{L} \otimes \hat{M})=\mu_{min}(\hat{L})\mu_{min}(\hat{M}) \geq \mu_{min}(L)\mu_{min}(M)
\end{equation}
contradicting \eqref{mu3}.

It remains to prove that $\bigcap_{i=1}^{r} F_i =\left\lbrace 0 \right\rbrace$, or alternatively that $\sum_{i=1}^{r}F_i^{\perp} = F$ (note that the corresponding statement for the $E_i$s is automatically satisfied from the assumption that these subspaces are mutually orthogonal). Having proven that $E=\bigoplus_{i=1}^{r} E_i$, and using the pairwise orthogonality of the $E_i$s, a simple calculation yields
\[ U^{\perp}=\left( \bigoplus_{i=1}^rE_i\otimes F_i\right)^{\perp}=\bigoplus_{i=1}^rE_i\otimes F_i^{\perp}. \]
From part (1)  we can moreover assume that $L$ and $M$ are semistable, as well as $L^{*}$ and $M^{*}$. In particular 
\[ \mu_{min}(L^{*})\mu_{min}(M^{*})=\mu(L^{*})\mu(M^{*})=\mu(L^{*}\otimes M^{*}) \]
As $U$ is the destabilizing subspace of $E \otimes F$ with respect to $L\otimes M$, we infer that $U^{\perp}$ is the co-destabilizing subspace of 
$E \otimes F$ with respect to $L^{*}\otimes M^{*}$, so that the destabilizing subspace $V$ of 
$E \otimes F$ with respect to $L^{*}\otimes M^{*}$ is contained in $U^{\perp}=\bigoplus_{i=1}^r E_i\otimes F_i^{\perp}$. In particular, since $U^{\perp}\subsetneq E\otimes F$, as $U \neq \left\lbrace 0 \right\rbrace$, we have 
\begin{equation}\label{key1}
\mu(L^{*}\otimes M^{*}) >\mu_{min}(L^{*} \otimes M^{*})= \mu(L^{*} \otimes M^{*} \cap V)
\end{equation}
by definition of $V$.
Setting $\tilde{F}=\sum_{i=1}^{r}F_i^{\perp}$ and
 $\widetilde{M^{*}}=M^{*}\cap \tilde{F}$, and noticing that $V \subset E \otimes \tilde{F}$, we can reproduce essentially the same argument as before : one has 
 \[ \left(L^{*} \otimes M^{*}\right) \cap V = \left(L^{*} \otimes\widetilde{M^{*}}\right) \cap V \]
 so that
 \begin{equation}\label{key2}
 \mu_{min}(L^{*} \otimes M^{*})= \mu(L^{*} \otimes M^{*} \cap V)=\mu(L^{*} \otimes \widetilde{M^{*}} \cap V) 
 \end{equation}
 and
 \begin{equation}\label{ten}
 \mu(L^{*}\otimes M^{*}) >\mu_{min}(L^{*} \otimes M^{*})=\mu(L^{*} \otimes \widetilde{M^{*}} \cap V) \geq \mu_{min}(L^{*} \otimes \widetilde{M^{*}}) .
 \end{equation}
If $\tilde{F} \neq F$, then $\dim L^{*} + \dim\widetilde{M^{*}} < \dim L + \dim M$ and the minimality assumption on the pair $(L,M)$ again implies that
\begin{equation}\label{onze}
\mu_{min}(L^{*} \otimes \widetilde{M^{*}}) =\mu_{min}(L^{*})\mu_{min}(\widetilde{M^{*}}) \geq \mu_{min}(L^{*})\mu_{min}(M^{*})= \mu(L^{*}\otimes M^{*}),
\end{equation}
contradicting \eqref{ten}. Therefore $\tilde{F}=F$ , or in other words $\bigcap_{i=1}^{r} F_i =\left\lbrace 0 \right\rbrace$.
\end{enumerate} 
\end{proof}

\section{Main result}\label{main}
\begin{theorem}\label{mt}
	Let $K$ denote either a totally real or a CM number field, and let $L$ and $M$ be $\mathcal{O}_K$-lattices, respectively in $E=KL$ and $F=K M$. 
	Let $G\leq \Aut L$ and $H\leq \Aut M$ and assume $E$ and $F$ to be respectively $G$ and $H$ multiplicity free. Denote by $r$ and $s$ the number of irreducible components of $E$ and $F$ respectively. Then, if $\min (r,s)\leq 2$, one has
	\begin{equation*}
		\mu_{\min}(L\otimes M)=\mu_{min}(L)\mu_{min}(M). 
	\end{equation*}
\end{theorem}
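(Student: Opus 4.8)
The plan is to argue by contradiction from a minimal counterexample, locating the destabilizing sublattice of $L\otimes M$ explicitly by means of the group action. Assume the theorem fails and pick a pair $(L,M)$ satisfying the hypotheses with $\mu_{\min}(L\otimes M)\neq\mu_{\min}(L)\mu_{\min}(M)$ and $\dim L+\dim M$ minimal; exchanging $L$ and $M$ if needed, assume that $E=KL$ has $r\leq 2$ irreducible components $E_1,\dots,E_r$, which are pairwise orthogonal since $G$ acts unitarily. The family of pairs with $\min(r,s)\leq 2$ is stable under the operations used in the proof of Proposition~\ref{reduction}, so that argument shows that $L$ and $M$ — hence also $L^{*}$ and $M^{*}$, by Proposition~\ref{dual} — are semistable. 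Let $U\subseteq E\otimes F$ be the destabilizing subspace of $L\otimes M$; since $G\times H\leq\Aut(L\otimes M)$ it is $G\times H$-stable by Theorem~\ref{basics}~\eqref{autinv}, so Proposition~\ref{mf} gives $U=\bigoplus_{i=1}^{r}E_i\otimes F_i$ for suitable $H$-submodules $F_i\subseteq F$.

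If $r=1$, or if $r=2$ and some $F_i=\{0\}$, then $U=E'\otimes F'$ for a $G$-submodule $E'$ and an $H$-submodule $F'$, so the destabilizing sublattice is $(L\cap E')\otimes(M\cap F')$ and $\mu_{\min}(L\otimes M)=\mu(L\cap E')\,\mu(M\cap F')\geq\mu(L)\mu(M)$ by semistability of $L$ and $M$, contradicting $\mu_{\min}(L\otimes M)<\mu_{\min}(L)\mu_{\min}(M)$. Hence $r=2$ and $F_1,F_2\neq\{0\}$. Write $E=E_1\perp E_2$, $\ell_i=\dim E_i$, $m_i=\dim F_i\geq 1$, $L_i=L\cap E_i$, $M_i=M\cap F_i$ (primitive sublattices spanning $E_i$ and $F_i$), and, relabelling both indices simultaneously if necessary (which does not change $U$), assume $m_1\geq m_2$. (Proposition~\ref{reduction}~\eqref{deux} also yields $F=F_1\perp F_2$, though the estimate below does not need this.)

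Put $\Delta=(L\otimes M)\cap U$, the destabilizing sublattice, so that $\mu(\Delta)=\mu_{\min}(L\otimes M)$ and $\Delta$ spans $U=(E_1\otimes F_1)\perp(E_2\otimes F_2)$. One checks that $\Delta\cap(E_1\otimes F_1)=(L\otimes M)\cap(E_1\otimes F_1)=L_1\otimes M_1$, a direct summand of $L\otimes M$; hence $Q:=\Delta/(L_1\otimes M_1)$ is a full-rank lattice in $(E_1\otimes F_1)^{\perp}\cap U=E_2\otimes F_2$, and it is contained in $\pi_{E_2}(L)\otimes M_2$, where $\pi_{E_2}$ denotes orthogonal projection onto $E_2$: indeed, any $x\in\Delta$ has $E_2\otimes F_2$-component $(\pi_{E_2}\otimes\id)(x)\in(\pi_{E_2}(L)\otimes M)\cap(E_2\otimes F_2)=\pi_{E_2}(L)\otimes M_2$. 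Applying Lemma~\ref{elem} to $0\to L_1\otimes M_1\to\Delta\to Q\to 0$,
\[
\mu(\Delta)^{\ell_1 m_1+\ell_2 m_2}=\bigl(\mu(L_1)\mu(M_1)\bigr)^{\ell_1 m_1}\mu(Q)^{\ell_2 m_2}.
\]
Now $\mu(Q)\geq\mu(\pi_{E_2}(L))\mu(M_2)$, since $Q$ is a finite-index sublattice of $\pi_{E_2}(L)\otimes M_2$ (Lemma~\ref{elem}); $\mu(L_1)^{\ell_1}\mu(\pi_{E_2}(L))^{\ell_2}=\mu(L)^{\ell_1+\ell_2}$, applying Lemma~\ref{elem} to $0\to L_1\to L\to L/L_1\cong\pi_{E_2}(L)\to 0$; $\mu(\pi_{E_2}(L))\leq\mu_{\max}(L)=\mu(L)$, as $\pi_{E_2}(L)$ is a quotient of the semistable lattice $L$; and $\mu(M_1),\mu(M_2)\geq\mu_{\min}(M)=\mu(M)$. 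Since $m_1\geq m_2$,
\[
\mu(L_1)^{\ell_1 m_1}\mu(\pi_{E_2}(L))^{\ell_2 m_2}=\mu(L)^{(\ell_1+\ell_2)m_1}\mu(\pi_{E_2}(L))^{\ell_2(m_2-m_1)}\geq\mu(L)^{(\ell_1+\ell_2)m_1+\ell_2(m_2-m_1)}=\mu(L)^{\ell_1 m_1+\ell_2 m_2},
\]
and likewise $\mu(M_1)^{\ell_1 m_1}\mu(M_2)^{\ell_2 m_2}\geq\mu(M)^{\ell_1 m_1+\ell_2 m_2}$; combining, $\mu(\Delta)\geq\mu(L)\mu(M)$. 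But $\mu(\Delta)=\mu_{\min}(L\otimes M)<\mu(L)\mu(M)=\mu_{\min}(L)\mu_{\min}(M)$ because $(L,M)$ is a counterexample — the desired contradiction.

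The main obstacle, and the reason the hypothesis $\min(r,s)\leq 2$ enters, is the third paragraph: collapsing a general $G\times H$-invariant sublattice (which a priori has a four-block shape relative to $E_1\perp E_2$ and the $F_i$) into the two-step filtration $L_1\otimes M_1\subseteq\Delta$ with quotient trapped between $L_2\otimes M_2$ and $\pi_{E_2}(L)\otimes M_2$. For $r\geq 3$ the subspace $\bigoplus_{i=1}^{r}E_i\otimes F_i$ genuinely entangles all the components, and no such reduction is visible. The asymmetric roles of $m_1$ and $m_2$ in the final estimate is a minor point, disposed of by the relabelling above.
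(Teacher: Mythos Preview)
Your argument is correct and noticeably more economical than the paper's. Both proofs start identically: minimal counterexample, reduce to semistable $L$ and $M$, decompose the destabilizing subspace as $U=(E_1\otimes F_1)\perp(E_2\otimes F_2)$. From there the paper invokes Proposition~\ref{reduction}\eqref{deux} to force $F=F_1\perp F_2$, builds the full index diagram~\eqref{diag}, and extracts two incompatible sign constraints $(m_1-m_2)(\ell_1-\ell_2)<0$ and $(m_1-m_2)(\ell_2-\ell_1)<0$, the second coming from a dual computation on the co-destabilizing sublattice of $(L\otimes M)^{*}$.

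You bypass all of that with a single filtration $L_1\otimes M_1\subset\Delta$ and the observation that the quotient $Q$ lands in $\pi_{E_2}(L)\otimes M_2$. The point is the asymmetric treatment of the two blocks: you pair the sublattice $L_1$ with the quotient $\pi_{E_2}(L)$, and then the identity $\mu(L_1)^{\ell_1}\mu(\pi_{E_2}(L))^{\ell_2}=\mu(L)^{\ell}$ together with $\mu(\pi_{E_2}(L))\leq\mu(L)$ gives exactly what is needed once $m_1\geq m_2$ is arranged. This avoids Proposition~\ref{reduction}\eqref{deux}, the orthogonality $F_1\perp F_2$, the index bookkeeping, and the duality step altogether. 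What the paper's symmetric setup buys is perhaps a clearer indication of why $r=2$ is the boundary (the two sign constraints visibly collapse to a single, non-contradictory one when $r\geq 3$), whereas in your argument the obstruction to larger $r$ is that the quotient $Q$ no longer sits in a single tensor product $\pi_{E_j}(L)\otimes M_j$.
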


For the proof of the theorem it is convenient to additionally assume that 
$L$ and $M$ are $K$-rational ${\mathcal O}_K$-lattices. 
The general case then follows by the strong approximation property as explained in
the following lemma.

\begin{lemma} \label{approx}
Let $L:=(L,(h_{\sigma})_{\sigma })$ be some ${\mathcal O}_K$-lattice and 
 let $G\leq \Aut(L, (h_{\sigma})_{\sigma }) $. 
Then there is a $K$-rational $G$-invariant form $h:L \times L \to K$,
such that $\sigma (h) $ approximates $h_{\sigma }$ simultaneously for 
all $\sigma $.
\end{lemma}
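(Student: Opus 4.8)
The plan is to realize the whole tuple $(h_\sigma)_\sigma$ as a single point in the real points of one finite-dimensional vector space defined over the maximal totally real subfield of $K$, and then to invoke weak approximation. Write $K_0$ for that subfield, so $K_0=K$ when $K$ is totally real, while $[K:K_0]=2$ with $K_0$ the fixed field of complex conjugation when $K$ is CM. First I would record that $G$ is finite: it preserves the lattice $L$ and lies in the (compact) unitary group of any one of the positive definite forms $h_\sigma$, hence is finite. In particular the Reynolds operator $\frac{1}{|G|}\sum_{g\in G}g$ is defined over $\mathbb{Q}\subseteq K_0$, so the functor of $G$-invariants is exact and commutes with every field extension of $K_0$.

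Next I would introduce the $K_0$-vector space $\mathcal{H}$ of all $K$-rational $G$-invariant Hermitian forms on $E$ (symmetric $K$-bilinear when $K=K_0$, and sesquilinear for the nontrivial element of $\mathrm{Gal}(K/K_0)$ in the CM case). The heart of the argument is a local-to-global comparison: for each infinite place $\sigma_0$ of $K_0$, with a chosen extension $\sigma$ to $K$, the map induced by applying $\sigma$ to matrix entries
\[
\mathcal{H}\otimes_{K_0,\sigma_0}\mathbb{R}\longrightarrow \{\,G\text{-invariant Hermitian forms on } E_\sigma\,\}
\]
is an isomorphism. To establish this I would first drop the group: choosing a $K$-basis of $E$ identifies Hermitian forms with Hermitian matrices, and since $K\otimes_{K_0,\sigma_0}\mathbb{R}\cong\C$ (resp. $\cong\mathbb{R}$ in the totally real case), base change carries $K$-Hermitian matrices isomorphically onto complex (resp. real symmetric) Hermitian matrices, i.e. onto all Hermitian forms on $E_\sigma$. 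Applying the exact, base-change-commuting functor of $G$-invariants then yields the displayed isomorphism.

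Assembling these isomorphisms over all infinite places of $K_0$ produces a canonical identification
\[
\mathcal{H}\otimes_{\mathbb{Q}}\mathbb{R}\;\cong\;\prod_{\sigma_0\mid\infty}\{\,G\text{-invariant Hermitian forms on }E_\sigma\,\},
\]
under which the diagonal image of $\mathcal{H}$ is dense, by weak approximation: a finite-dimensional $K_0$-vector space is dense in its real points, since $K_0$ is dense in $K_0\otimes_\mathbb{Q}\mathbb{R}$. As each $h_\sigma$ is $G$-invariant by hypothesis, the tuple $(h_\sigma)_\sigma$ lies in the right-hand side; hence for every $\varepsilon>0$ there exists $h\in\mathcal{H}$ with $\sigma(h)$ within $\varepsilon$ of $h_\sigma$ simultaneously for all $\sigma$. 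This $h$ is $K$-rational and $G$-invariant by construction, which is the assertion. Since positive definiteness is an open condition, for small $\varepsilon$ the form $h$ is totally positive definite, so it equips $L$ with the structure of a $K$-rational $\mathcal{O}_K$-lattice, as needed for the reduction in the theorem.

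I expect the main obstacle to be the local-to-global isomorphism of the second paragraph: the delicate point is to verify that base change of the space of $K$-rational invariant forms surjects onto \emph{all} invariant Hermitian forms at each place, which requires handling together the commutation of $G$-invariants with base change and the real/complex structure of $K\otimes_{K_0,\sigma_0}\mathbb{R}$. Once this is in place, the finiteness of $G$ and weak approximation make the remainder routine.
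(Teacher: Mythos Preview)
Your proposal is correct and follows essentially the same route as the paper: both identify the $K_0$-vector space of $G$-invariant $K$-Hermitian forms (your $\mathcal{H}$, the paper's $\mathcal{F}(G)$), observe that its base change to $\mathbb{R}$ at each archimedean place recovers the real space of $G$-invariant Hermitian forms on $E_\sigma$, and then invoke density of $K_0$ in $K_0\otimes_{\mathbb{Q}}\mathbb{R}$ to approximate the tuple $(h_\sigma)_\sigma$. Your write-up is slightly more careful than the paper's on two points: you justify the base-change compatibility of $G$-invariants via the Reynolds operator (which is exactly the content of the paper's unproved assertion that the real space of invariant forms at $\sigma$ is ``spanned by the matrices $\sigma(H)$ with $H\in\mathcal{F}(G)$''), and you use the term \emph{weak} approximation, which is the accurate name here, whereas the paper says ``strong approximation''.
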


\begin{proof}
Fix a basis $B$ of $KL $ and work with matrices with respect for $B$. 
Then $G\leq \GL_{\ell }(K)$ is a finite matrix group. 
Put 
$${\mathcal F}(G) := \{ H \in K^{\ell \times \ell} \mid  H = \overline{H}^{tr} 
\mbox{ and }
gH\overline{g}^{tr} = H \mbox{ for all } g\in G \} $$ 
where $\overline{\phantom{s}} $ is the complex conjugation 
in the case that $K$ is a CM field 
 (applied to the entries of the matrices) 
and the identity if $K$ is
totally real. Then ${\mathcal F}(G)$ is a finite 
dimensional vector space over the fixed field of $\overline{\phantom{s}}$
As 
$G\leq \Aut(L, h_{\sigma}) $
 the 
Gram matrices  $H_{\sigma }$ of the $h_{\sigma }$ with respect to 
$B$  are Hermitian matrices satisfying 
$$\ (\star ) \ \sigma (g) H_{\sigma } \overline{\sigma(g)}^{tr} = H_{\sigma } 
\mbox{ for all } g\in G,$$ where $\sigma $ and the complex conjugation 
$\overline{\phantom{s}} $ are  applied  entrywise. 
The matrices $H_{\sigma }$ satisfying $(\star )$ form a 
finite dimensional real vector space spanned by the matrices 
$\sigma (H) $, with 
$H\in {\mathcal F}(G) $.
By the strong approximation property for finite dimensional vector spaces 
 we can find $H\in {\mathcal F}(G) $ such 
that $\sigma(H)$ approximates $H_{\sigma }$ simultaneously for all $\sigma $. 
\end{proof}

Now assume that we have proven Theorem \ref{mt} for $K$-rational ${\mathcal O}_K$-lattices.
Assume that there is a minimal counterexample to the general case.
Then there are semistable ${\mathcal O}_K$-lattices $L$ and $M$ as in the Theorem and
a sublattice $N$ of $L\otimes M$ 
with $\mu (N) < \mu (L) \mu (M) $. 
Approximating the complex valued inner products on $L$ and  $M$ by $K$-rational 
inner products  that are invariant under $G$ respectively $H$ close enough, 
the same lattice $N$ will also satisfy 
$\mu (N) < \mu (L) \mu (M) $ with respect to these $K$-rational forms, contradicting 
our proof. 
So in the following we can and will always assume that $L$ and $M$ are $K$-rational ${\mathcal O}_K$-lattices.

The proof of the theorem relies on the reduction allowed by Proposition \ref{reduction}. The following classical lemma will also be used several times :
\begin{lemma}\label{proj} Let $L$ be a $K$-rational ${\mathcal O}_K$-lattice with underlying $K$-vector space $E$.
	
\begin{enumerate}
	\item\label{cross} If $F$ is a subspace of $E$ and $F^{\perp}$ its orthogonal, then one has the following isomorphisms :
	\[ L\slash \left( L\cap F\perp L \cap F^{\perp} \right) \simeq \pi^{}_F(L)\slash L\cap F \simeq \pi_{F^{\perp}}(L)\slash L \cap F^{\perp} \simeq \pi^{}_F(L) \perp \pi_{F^{\perp}}(L) \slash L.
	\]
	\item Let $F_1, F_2,F_3$ be subspaces of $E$, such that $F_3=F_1 \perp F_2$, and let $\pi_1$, $\pi_2$ and $\pi_3$ denote the orthogonal projections onto $F_1$, $F_2$ and $F_3$ respectively. For $i=1,2,3$, set $L_i=L\cap F_i$. 
	\begin{enumerate}
		\item\label{aaa} There are natural injective morphisms
	\[ 	L_3\slash (L_1 \perp L_2) \hookrightarrow \pi_1 L \slash L_1 \text{ and } L_3\slash (L_1 \perp L_2) \hookrightarrow \pi_2 L \slash L_2 . \]
		\item\label{bbb} There are natural surjective  morphisms
			\[ 	\pi_1 L \slash L_1 \twoheadrightarrow (\pi_1 L \perp \pi_2 L)\slash \pi_3L \text{ and } \pi_2 L \slash L_2 \twoheadrightarrow (\pi_1 L \perp \pi_2 L)\slash \pi_3L .\]

	\end{enumerate}
	
\end{enumerate}
\end{lemma}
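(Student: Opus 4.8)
The plan is to derive all statements from a single mechanism: every module occurring in the lemma is a \emph{torsion} (equivalently, finite) $\mathcal{O}_K$-module — because each numerator and denominator spans the same $K$-subspace of $E$ — and every arrow is obtained by restricting one of the orthogonal projections to a suitable sublattice and then passing to a quotient. So the whole proof is a matter of checking well-definedness and computing kernels, using only that an orthogonal projection restricts to the identity on its target subspace, to zero on its orthogonal complement, and that $\pi_3=\pi_1+\pi_2$ (which holds since $F_3=F_1\perp F_2$ with $F_1,F_2\subseteq F_3$). I would prove part (2) first and then obtain part (1) by specializing it to $(F_1,F_2,F_3)=(F,F^{\perp},E)$.

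For (2)(a) I would define $\alpha_1\colon L_3\to \pi_1 L/L_1$ by $x\mapsto \pi_1(x)+L_1$. It factors through $L_3/(L_1\perp L_2)$ since $\pi_1$ is the identity on $F_1\supseteq L_1$ while $\pi_1$ kills $F_2\supseteq L_2$ (as $F_2\perp F_1$ inside $F_3$). It is injective because, if $x\in L_3$ has $\pi_1(x)\in L_1\subseteq L$, then from the orthogonal decomposition of $x\in F_3$ we get $\pi_2(x)=x-\pi_1(x)\in L\cap F_2=L_2$, hence $x=\pi_1(x)+\pi_2(x)\in L_1\perp L_2$; the case of $\pi_2$ is symmetric. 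For (2)(b) I would define $\beta_1\colon \pi_1 L\to (\pi_1 L\perp \pi_2 L)/\pi_3 L$ as the inclusion followed by the quotient map; it factors through $\pi_1 L/L_1$ because $\pi_3$ fixes $F_1\subseteq F_3$, so $L_1=L\cap F_1\subseteq \pi_3 L$. It is surjective because for every $x\in L$ one has $\pi_2(x)=\pi_3(x)-\pi_1(x)\in \pi_3 L+\pi_1 L$, so $\pi_1 L\perp \pi_2 L=\pi_1 L+\pi_2 L\subseteq \pi_1 L+\pi_3 L$, i.e.\ $\pi_1 L$ already surjects onto the quotient; again $\pi_2$ is symmetric.

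For part (1) I would specialize (2) to $(F_1,F_2,F_3)=(F,F^{\perp},E)$, so that $\pi_3=\mathrm{id}$, $L_3=L$ and $\pi_3 L=L$. In this case the map $\alpha_1$ of (2)(a) is \emph{also surjective}, since every element of $\pi_F(L)$ is $\pi_F(x)$ for some $x\in L$; hence $L/(L\cap F\perp L\cap F^{\perp})\simeq \pi_F(L)/(L\cap F)$, and by the $F\leftrightarrow F^{\perp}$ symmetry also $\simeq \pi_{F^{\perp}}(L)/(L\cap F^{\perp})$. Likewise the map $\beta_1$ of (2)(b) is now \emph{also injective}: its kernel is $(\pi_F(L)\cap L)/(L\cap F)$, and $\pi_F(L)\cap L=L\cap F$ (``$\subseteq$'' since $\pi_F(L)\subseteq F$; ``$\supseteq$'' since $\pi_F$ fixes $F$, so $L\cap F=\pi_F(L\cap F)\subseteq\pi_F(L)$), hence the kernel is trivial and $\pi_F(L)/(L\cap F)\simeq (\pi_F(L)\perp \pi_{F^{\perp}}(L))/L$. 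Chaining these four identifications gives exactly the string of isomorphisms of (1).

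There is no serious obstacle; this is a bookkeeping lemma. The only points needing a little care are the two elementary identities $L\cap F_3\cap F_2=L\cap F_2$ (used repeatedly, valid because $F_2\subseteq F_3$) and $\pi_F(L)\cap L=L\cap F$, both instances of the behaviour of orthogonal projections on their image and its complement. One should also note that it is the $K$-rationality of $L$ (Remark \ref{rk1}) that makes the orthogonal projections $\pi_{F_i}$ and the orthogonal complements available over $K$, so that all the modules above are genuine $\mathcal{O}_K$-submodules of $E$ and the maps are $\mathcal{O}_K$-linear.
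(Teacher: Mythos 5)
Your proof is correct; every step checks out (the well-definedness of $\alpha_1$, its injectivity via $\pi_2(x)=x-\pi_1(x)\in L\cap F_2$, the surjectivity of $\beta_1$ via $\pi_1(x)+\pi_2(y)\equiv\pi_1(x-y)\bmod \pi_3L$, and the identity $\pi_F(L)\cap L=L\cap F$ used to kill the kernel in the specialization). The logical architecture, however, is the reverse of the paper's: the paper proves part (1) directly --- the kernel of $L\twoheadrightarrow \pi_F(L)/(L\cap F)$ is $L\cap F\perp L\cap F^{\perp}$, and the last isomorphism comes from $\pi_F(x)+\pi_{F^{\perp}}(y)\equiv\pi_F(x-y)\bmod L$ --- and then deduces (2)(a) and (2)(b) by applying (1) twice inside $F_3$, once to the lattice $L_3$ (giving $L_3/(L_1\perp L_2)\simeq \pi_iL_3/L_i\hookrightarrow \pi_iL/L_i$) and once to the lattice $\pi_3L$ (giving $(\pi_1L\perp\pi_2L)/\pi_3L\simeq \pi_iL/(\pi_3L\cap F_i)$, a quotient of $\pi_iL/L_i$ since $\pi_3L\cap F_i\supseteq L_i$). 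You instead prove (2) from scratch and recover (1) as the special case $(F_1,F_2,F_3)=(F,F^{\perp},E)$, observing that $\alpha_1$ becomes surjective and $\beta_1$ injective there. The paper's version is slightly more economical (one direct computation, reused twice); yours makes the injectivity in (2)(a) and surjectivity in (2)(b) completely explicit rather than inherited, which is arguably easier to verify line by line. Either organization is perfectly acceptable for this bookkeeping lemma.
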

\begin{proof}
\begin{enumerate}
	\item The kernel of the surjective morphism $L \twoheadrightarrow \pi^{}_F(L)\slash L\cap F$ is clearly equal to $L\cap F\perp L \cap F^{\perp}$ and similarly exchanging $F$ and $F^{\perp}$ , whence the first two isomorphisms. As for the last one, it amounts to show that the injective morphism $\pi^{}_F(L)\slash L\cap F \hookrightarrow \pi^{}_F(L) \perp \pi_{F^{\perp}}(L) \slash L $ is onto, which is clear from the observation that $\pi^{}_F(x) +\pi_{F^{\perp}}(y) \equiv \pi^{}_F(x-y) \mod L$ .
	\item This follows from \eqref{cross}, since $L_3\slash (L_1 \perp L_2) \simeq \pi_i L_3/L_i \hookrightarrow \pi_i L \slash L_i$ for $i=1,2$.
		\item Similarly, by \eqref{cross}, we know that $(\pi_1 L \perp \pi_2 L)\slash \pi_3L$ is isomorphic to $\pi_i L\slash \pi_3L \cap F_i$, for $i=1,2$, whence the conclusion since $\pi_3L \cap F_i \supset L_i$.
\end{enumerate}
\end{proof}
 
\begin{proof}[Proof of Theorem \ref{mt}]
Assume, by a way of contradiction, that there exists a pair of lattices $(L,M)$ satisfying the assumptions of the theorem and violating its conclusion, \ie such that 
\begin{equation}\label{aa}
\mu_{min}(L\otimes M) <\mu_{\min}(L)\mu_{\min}(M).
\end{equation}
By Proposition \ref{reduction} \eqref{un}, we can assume that both $L$ and $M$ are semistable. If $\min(r,s)=1$, that is if either $E$ or $F$ is absolutely irreducible, then the relation $\mu_{\min}(L\otimes M)=\mu_{min}(L)\mu_{min}(M)$ is well-known to hold (see introduction), in contradiction with \eqref{aa}. Thus $\min(r,s)=2$ and we may assume, without loss of generality, that $r=2$. In other words, $E$ splits as 
\begin{equation}\label{d1}
E= E_1 \perp E_2,
\end{equation}
where $E_1$ and $E_2$ are non isomorphic, and consequently mutually orthogonal, $G$-absolutely irreducible subspaces of $E$. Let $U$ be the destabilizing subspace of $E\otimes F$ with respect to $L\otimes M$. As $U$ is $G\times H$-invariant, we know by Proposition \ref{mf} that there exists $H$-stable subspaces $F_1$ and $F_2$, such that
\begin{equation}\label{d2}
	 U=E_1\otimes F_1 \perp E_2\otimes F_2.
\end{equation}

Furthermore, by Proposition \ref{reduction} \eqref{deux} we can assume that $F=F_1 \oplus F_2$. Since each $F_i$ is, by the multiplicity free assumption, a sum of irreducible, pairwise non isomorphic, hence mutually orthogonal, subrepresentations, we infer that $F_1$ and $F_2$ are themselves mutually orthogonal, so that
\begin{equation}\label{d3}
F=F_1 \perp F_2.
\end{equation}

Let $\ell = \dim L$ and $m=\dim M$. We set $L_i=L \cap E_i$ and $M_i=M\cap F_i$ ($i=1,2$) , denote by $\ell_i$ and $m_i$ their respective dimensions and by $\pi_i$ and $\pi'_i$ the orthogonal projection onto $E_i$ and $F_i$ respectively. Then $\ell=\ell_1+\ell_2$, $m=m_1 +m_2$, $L$ contains $L_1 \perp L_2$ and $M$ contains $M_1 \perp M_2$ , both with finite indices
\begin{equation}\label{indices}
a=\left[L: L_1 \perp L_2\right] \ , \ b=\left[M: M_1 \perp M_2\right].
\end{equation} 
The destabilizing sublattice $$P:=U\cap (L\otimes M)$$ of $L\otimes M$ contains $L_1\otimes M_1 \perp L_2\otimes M_2$ with an index $x \geq 1$ .

As a consequence of \eqref{d1}, \eqref{d2} and \eqref{d3}, one has 
\begin{equation}\label{d4}
U^{\perp}=E_1\otimes F_2 \perp E_2\otimes F_1
\end{equation} 
and we infer from Proposition \ref{dual} and Remark \ref{rk2} that the co-destabilizing sublattice of $(L \otimes M)^{*}$ is 
$$P^{\sharp}=(L\otimes M)^{*} \cap U^{\perp} =\left( \pi_{U^{\perp}}(L\otimes M)\right)^{*}.$$

The situation is summarized in the following diagram :\medskip

\begin{equation}\label{diag}
\begin{tikzcd} [column sep=-10]
\ &	\pi^{}_1L\otimes \pi'_1M\perp\ar[d,"x\,' " ,dash] \pi^{}_2L\otimes\pi'_2M&&\pi^{}_1L\otimes \pi'_2M\perp\ar[d,"y\,'" ',dash] \pi^{}_2L\otimes\pi'_1M & \ \\
&	\pi^{}_U(L\otimes M)\ar[dd,"t",dash,dashed] &\perp\ar[d,dash] & \pi^{}_{U^{\perp}}(L\otimes M)\ar[dd,"t"',dash,dashed] & \\
&	\ &L\otimes M\ar[d,dash]& \ & \\
&	P=L\otimes M \cap U\ar[d,"x",dash]&\perp & L\otimes M \cap U^{\perp}\ar[d,"y"',dash] & \\
\arrow[uuuu,bend left,"a^mb^{\ell}",dash,dashed]&	L_1\otimes M_1\perp L_2 \otimes M_2&&L_1\otimes M_2\perp L_2 \otimes M_1 & \arrow[uuuu,bend right,"a^mb^{\ell}" ',dash,dashed]\\
\end{tikzcd} 
\end{equation}

On the left-hand side of the above diagram, we observe that for 
$i=1,2$ the map $\pi^{}_i \otimes \pi'_i$ induces a monomorphism 
$$\dfrac{P}{L_1\otimes M_1\perp L_2 \otimes M_2} \hookrightarrow \dfrac{\pi^{}_iL\otimes \pi'_iM }{ L_i\otimes M_i} $$ (this is Lemma \ref{proj} \eqref{aaa}), 
whence the upper bound 
\begin{equation}\label{trois}
x \leq \min(a^{m_1}b^{\ell_1},a^{m_2}b^{\ell_2}). 
\end{equation}
Set
\begin{equation*}
\alpha_i=	\left( \dfrac{\mu(L_i)}{\mu(L)} \right)^{\ell_i} \text{ and }\ \beta_i=	\left( \dfrac{\mu(M_i)}{\mu(M)} \right)^{m_i}, \ i=1,2. 
\end{equation*}
These quantities satisfy the relations 
\begin{equation}\label{alphabeta}
\alpha_1\alpha_2=a \text{ and } \beta_1\beta_2 = b.
\end{equation}
Indeed, using Lemma \ref{elem} we have
\[ a=\left[L:L_1\perp L_2\right]=\dfrac{\vol (L_1) \vol(L_2)}{\vol (L)} 
= \dfrac{\mu (L_1)^{\ell_1} \mu (L_2) ^{\ell _2}}{\mu(L) ^{\ell_1+\ell_2}} =
\alpha _1 \alpha _2\]

which proves the first identity in \eqref{alphabeta}, the second being identical.

Using Lemma \ref{elem} and the fact that $L$ and $M$ are semistable, the assumption that $\mu(P)< \mu_{min}(L)\mu_{min}(M)$ amounts to say that
\begin{align*}
	x &=\left[P : 	(L_1\otimes M_1\perp L_2 \otimes M_2)\right]\\ 
&=\dfrac{\left( \mu(L_1)\mu(M_1) \right)^{\ell_1m_1}\left( \mu(L_2)\mu(M_2) \right)^{\ell_2m_2}}{\mu(P)^{\ell_1m_1+\ell_2m_2}}\\
&>\dfrac{\left( \mu(L_1)\mu(M_1) \right)^{\ell_1m_1}\left( \mu(L_2)\mu(M_2) \right)^{\ell_2m_2}}{\left( \mu(L)\mu(M) \right)^{\ell_1m_1+\ell_2m_2}}=\alpha_1^{m_1}\beta_1^{\ell_1}\alpha_2^{m_2}\beta_2^{\ell_2}.
\end{align*}

Together with \eqref{alphabeta} this yields 
\begin{equation}\label{quatre}
x > \alpha_1^{m_1}\beta_1^{\ell_1}\alpha_2^{m_2}\beta_2^{\ell_2} = \alpha_1^{m_1-m_2}\beta_1^{\ell_1-\ell_2}a^{m_2}b^{\ell_2}.
\end{equation}

The combination of \eqref{trois} and \eqref{quatre} implies that 
\[ \alpha_1^{m_1-m_2}\beta_1^{\ell_1-\ell_2}a^{m_2}b^{\ell_2} < \min (a^{m_1}b^{\ell_1},a^{m_2}b^{\ell_2}) \]
or equivalently
\begin{equation} \label{cinq}
\alpha_1^{m_1-m_2}\beta_1^{\ell_1-\ell_2} < \min (1,a^{m_1-m_2}b^{\ell_1-\ell_2}).
\end{equation}

As a consequence of the semistability of $L$ and $M$, one has $\alpha_i \geq 1$ and $\beta_i \geq 1$ , $i=1,2$. Together with the relations $\alpha_1\alpha_2=a$ and $\beta_1\beta_2 = b$, it implies that $1 \leq \alpha_1 \leq a$ and $1 \leq \beta_1 \leq b$. Consequently, inequality \eqref{cinq} cannot hold unless $m_1-m_2$ and $\ell_1-\ell_2$ have opposite signs, that is
\begin{equation}\label{sign1}
(m_1-m_2)(\ell_1-\ell_2)<0.
\end{equation}\medskip
 
 We now consider the right-hand side of the diagram, using duality. 
 First, we have an upper bound for $y'=\left[(\pi^{}_1L\otimes \pi'_2M\perp \pi^{}_2L\otimes\pi'_1M ): \pi^{}_{U^{\perp}}(L\otimes M)\right]$, similar to \eqref{trois}, namely 
 \begin{equation}\label{six}
 y' \leq \min(a^{m_1}b^{\ell_2},a^{m_2}b^{\ell_1}),
 \end{equation}
obtained either by using the natural epimorphisms $\pi^{}_1L\otimes \pi'_2M \slash L_1\otimes M_2 \twoheadrightarrow \pi^{}_1L\otimes \pi'_2M\perp \pi^{}_2L\otimes\pi'_1M \slash \pi^{}_{U^{\perp}}(L\otimes M)$ and $\pi^{}_2L\otimes \pi'_1M \slash L_2\otimes M_1 \twoheadrightarrow \pi^{}_1L\otimes \pi'_2M\perp \pi^{}_2L\otimes\pi'_1M \slash \pi^{}_{U^{\perp}}(L\otimes M)$ (see Lemma \ref{proj} \eqref{bbb}), or by dualizing \eqref{trois}.

Furthermore, one has
 \begin{align*}
 y'&=\left[P^{\sharp} : \left( \pi^{}_1L\otimes \pi'_2M\perp \pi^{}_2L\otimes\pi'_1M \right)^{*}\right]\\
 &=\left[\dfrac{\mu\left( \pi^{}_1L\otimes \pi'_2M\perp \pi^{}_2L\otimes\pi'_1M \right)^{*}} {\mu (P^{\sharp})}\right]^{\ell_1m_2+\ell_2m_1}\\
 &=\dfrac{\left( \mu(\pi^{}_1L)\mu(\pi'_2M) \right)^{-\ell_1m_2}\left( \mu( \pi^{}_2L)\mu(\pi'_1M ) \right)^{-\ell_2m_1}}{\mu (P^{\sharp})^{\ell_1m_2+\ell_2m_1}}.
 \end{align*}
 Using Lemma \ref{elem} again, one has
 \begin{align*}
 \mu (P^{\sharp})^{\ell_1m_2+\ell_2m_1}&=\mu(L^{*}\otimes M^{*})^{\ell m}\mu\left( \dfrac{L^{*}\otimes M^{*}}{P^{\sharp}} \right)^{-\left( \ell m -(\ell_1m_2+\ell_2m_1) \right)}\\
 &=\left( \mu(L)\mu(M)\right)^{-\ell m}\mu\left( \dfrac{L^{*}\otimes M^{*}}{P^{\sharp}} \right) ^{-\left( \ell_1m_1+\ell_2m_2 \right)}.
 \end{align*}
 Since $P^{\sharp}$ is the co-destabilizing sublattice of $(L\otimes M)^{*}$,and $L$ and $M$ are semistable, we have 
 \begin{equation*}
 \mu\left( \dfrac{L^{*}\otimes M^{*}}{P^{\sharp}} \right) > \mu_{max}(L^{*})\mu_{max}(M^{*})=\mu(L)^{-1}\mu(M)^{-1}.
 \end{equation*}

 Plugging this into the above calculation of $y'$, we get
 \begin{equation*}
 y' >\dfrac{\left( \mu(L)\mu(M) \right)^{\ell_1m_2+\ell_2m_1}}{\left( \mu(\pi^{}_1L)\mu(\pi'_2M) \right)^{\ell_1m_2}\left( \mu( \pi^{}_2L)\mu(\pi'_1M ) \right)^{\ell_2m_1}}
 \end{equation*}
 whence finally, using \eqref{alphabeta},
 \begin{equation}\label{sept}
 y'>a^{m_1+m_2}b^{\ell_1+\ell_2}\alpha_1^{-m_2}\beta_2^{-\ell_1}\alpha_2^{-m_1}\beta_1^{-\ell_2}=a^{m_2}b^{ \ell_1}\alpha_1^{m_1-m_2}\beta_2^{\ell_2-\ell_1}
 \end{equation}
 where we also used the relations $\mu(\pi^{}_i L)=a^{-\frac{1}{\ell_i}}\mu(L_i)$ and $\mu(\pi'_i M)=b^{-\frac{1}{m_i}}\mu(M_i)$. The combination of \eqref{six} and \eqref{sept} yields
\begin{equation*}
a^{m_2}b^{ \ell_1}\alpha_1^{m_1-m_2}\beta_2^{\ell_2-\ell_1}< \min(a^{m_1}b^{\ell_2},a^{m_2}b^{\ell_1})
\end{equation*} or equivalently
\begin{equation} \label{huit}
\alpha_1^{m_1-m_2}\beta_2^{\ell_2-\ell_1}<\min(1,a^{m_1-m_2}b^{\ell_2-\ell_1}).
\end{equation}

The same argument we used to derive \eqref{sign1} now yields
\begin{equation}\label{sign2}
(m_1-m_2)(\ell_2-\ell_1)<0
\end{equation}
which is incompatible with \eqref{sign1}.

 \end{proof}

\providecommand{\bysame}{\leavevmode\hbox to3em{\hrulefill}\thinspace}

 \end{document}